\providecommand{\U}[1]{\protect\rule{.1in}{.1in}}
\newtheorem{proposition}{Proposition}[section]
\newtheorem{theorem}[proposition]{Theorem}
\newtheorem{lemma}[proposition]{Lemma}
\newtheorem{definition}[proposition]{Definition}
\newtheorem{remark}[proposition]{Remark}
\newtheorem{example}[proposition]{Example}
\newtheorem{condition}[proposition]{Condition}
\numberwithin{equation}{section}
\numberwithin{proposition}{section}
\begin{document}
\title{Rare event simulation for multiscale diffusions in random environments}

\author{Konstantinos Spiliopoulos}
\address{Department of Mathematics \& Statistics\\
Boston University\\
Boston, MA 02215}
\email{kspiliop@math.bu.edu}

\date{\today}

\begin{abstract}
We consider systems of stochastic differential equations with multiple scales and small noise and assume that the coefficients of the equations are  ergodic and stationary random fields. Our goal is to construct provably-efficient importance sampling Monte Carlo methods that allow efficient computation of rare event probabilities or expectations of functionals that can be associated with rare events. Standard Monte Carlo algorithms perform poorly in the small noise limit and hence fast simulations algorithms become relevant. The presence of multiple scales complicates the design and the analysis of efficient importance sampling schemes. An additional complication is the randomness of the environment. We construct explicit changes of measures that are proven to be logarithmic asymptotically efficient with probability one with respect to the random environment (i.e., in the quenched sense). Numerical simulations support the theoretical results.
\end{abstract}

\keywords{Keywords: Importance sampling, Monte Carlo, Large deviations, multiscale diffusions, random coefficients, quenched
homogenization}

\subjclass{60F10$\cdot$60F05$\cdot$60G60}

\maketitle

\section{Introduction}

\label{S:Intro}

In this paper, we study the construction of logarithmic asymptotically optimal importance sampling methods for small noise multiscale diffusion processes in random environments. Importance sampling is a variance reduction technique in Monte-Carlo simulation and it is in particularly useful when one deals with rare events.   For example, consider a sequence $\{X^{\epsilon}\}_{\epsilon>0}$ of random elements and assume that we want to estimate the probability $0<\mathbb{P}\left[X^{\epsilon}\in A\right]\ll 1$ for a given set $A$, such that the event $\left\{X^{\epsilon}\in A\right\}$ is unlikely for small $\epsilon$. If one is interested in accurate estimation of quantities such as $p^{\epsilon}=\mathbb{P}\left[X^{\epsilon}\in A\right]$  for $\epsilon$ small, and closed form formulas are not available, or numerical approximations are either too crude or unavailable, then one has to resort in simulation. However, as it is well known, standard Monte-Carlo simulation techniques (i.e., using the unbiased estimator $\hat{p}^{\epsilon}=\frac{1}{N}\sum_{j=1}^{N}1_{X^{\epsilon,j}\in A}$) perform rather poorly in the rare-event regime. In particular, for standard Monte-Carlo it is known that in order to achieve relative error smaller than one, one needs an effective sample size $N\approx 1/p^{\epsilon}$, e.g. see \cite{AsmussenGlynn2007}. Namely, for a fixed computational
cost, relative errors grow rapidly as the event becomes more rare.   This makes standard Monte-Carlo infeasible for rare-event simulation. Thus, one is lead to construct accelerated Monte-Carlo methods that reduce the variance of the estimators, and a popular method to do so is importance sampling.

In importance sampling one identifies an appropriate change of measure and simulates the system under the new measure, which is chosen in order to minimize the variance of the estimator. This is exactly the problem that this paper addresses for small-noise, multiscale diffusion processes in random environments. We consider the following general class of such models. Let $0<\varepsilon,\delta\ll 1$ and consider the process $\left(X^{\epsilon}, Y^{\epsilon}\right)=\left\{\left(X^{\epsilon}_{t}, Y^{\epsilon}_{t}\right), t\in[0,T]\right\}$ taking values in the space $\mathbb{R}^{m}\times\mathbb{R}^{d-m}$ that satisfies the system of stochastic differential equation (SDE's)

\begin{eqnarray}
dX^{\epsilon}_{t}&=&\left[  \frac{\epsilon}{\delta}b\left(Y^{\epsilon}_{t},\gamma\right)+c\left(  X^{\epsilon}_{t}%
,Y^{\epsilon}_{t},\gamma\right)\right]   dt+\sqrt{\epsilon}%
\sigma\left(  X^{\epsilon}_{t},Y^{\epsilon}_{t},\gamma\right)
dW_{t},\nonumber\\
dY^{\epsilon}_{t}&=&\frac{1}{\delta}\left[  \frac{\epsilon}{\delta}f\left(Y^{\epsilon}_{t},\gamma\right)  +g\left(  X^{\epsilon}_{t}%
,Y^{\epsilon}_{t},\gamma\right)\right] dt+\frac{\sqrt{\epsilon}}{\delta}\left[
\tau_{1}\left(  Y^{\epsilon}_{t},\gamma\right)
dW_{t}+\tau_{2}\left(Y^{\epsilon}_{t},\gamma\right)dB_{t}\right], \label{Eq:Main}\\
X^{\epsilon}_{0}&=&x_{0},\hspace{0.2cm}Y^{\epsilon}_{0}=y_{0}\nonumber
\end{eqnarray}

We assume that $\delta=\delta(\epsilon)\downarrow0$ such that $\epsilon/\delta\uparrow\infty$ as $\epsilon\downarrow0$.  $(W_{t}, B_{t})$ is a $2\kappa-$dimensional standard Wiener process. We assume that for each fixed $x\in\mathbb{R}^{m}$,  $b(\cdot,\gamma), c(x,\cdot,\gamma),\sigma(x,\cdot,\gamma),f(\cdot,\gamma)$, $g(x,\cdot,\gamma), \tau_{1}(\cdot,\gamma)$ and $\tau_{2}(\cdot,\gamma)$ are stationary and ergodic random fields in an appropriate probability space $\left(\Gamma,\mathcal{G},\nu\right)$ with $\gamma\in\Gamma$.

One can interpret the system (\ref{Eq:Main}) as a small noise perturbation of a system of slow and fast components with $X^{\epsilon}$ being the slow and $Y^{\epsilon}$ being the fast component. As it is seen from (\ref{Eq:MainEx1}) and (\ref{Eq:MainEx2}) below, classical cases of interest are special cases of (\ref{Eq:Main}). The primary goal of this article is to provide a rigorous mathematical framework for the development of provably-efficient  and asymptotically optimal importance sampling Monte-Carlo methods. Given a realization of the random medium $\gamma$, our goal is to estimate quantities of the form
\begin{equation}
\theta(\epsilon,\gamma)=\mathbb{E}_{x_{0},y_{0}}\left[e^{-\frac{1}{\epsilon}h(X^{\epsilon,\gamma}_{T})} \right]\quad\text{ or }\quad \theta(\epsilon,\gamma)=\mathbb{P}_{x_{0},y_{0}}\left[X^{\epsilon,\gamma}_{T}\in A\right]\label{Eq:QuantitiesOfInterest}
\end{equation}
such that the estimators are known to be provably-efficient a-priori for almost all realizations of the random environment (quenched asymptotics). 
If one is interested in accurate estimation of quantities as in   (\ref{Eq:QuantitiesOfInterest}), then one does not have any hope of closed form formulas and logarithmic large deviation estimates are too crude (since they ignore potential important prefactors) and thus simulation becomes necessary.

Our work is partially motivated by related questions in chemical physics, molecular dynamics, climate modeling and neuroscience, e.g., \cite{EVMaidaTimofeyev2001,DupuisSpiliopoulosWang,DupuisSpiliopoulosWang2,Jirsa_etall,SchutteWalterHartmannHuisinga2005, Zwanzig}.  There, one is often interested in simplified models that preserve the large deviation properties of the system in the case where $\delta\ll\epsilon$, i.e., in the case where $\delta$ is orders of magnitude smaller than $\epsilon$.  In the large deviations regime, rare events dominate and then estimation of probabilities of rare events or of related expectations of functionals of interest becomes a challenging issue, since standard Monte Carlo perform poorly. In this paper, we provide a provably asymptotically-efficient importance sampling scheme for estimation of quantities such as (\ref{Eq:QuantitiesOfInterest}). The change of measure is motivated by the quenched large deviations results of \cite{Spiliopoulos2014}. In \cite{Spiliopoulos2014}, we proved the quenched, i.e., almost surely with respect to the random environment,  large deviations principle for (\ref{Eq:Main}). In particular, the control achieving the large deviations bound gives useful information, which motivates the design of provably efficient importance sampling schemes for estimation of related rare event probabilities.

To the best of the author's knowledge, this is the first work that addresses design of logarithmic asymptotically efficient importance sampling methods for multiscale diffusions in random environments. In the absence of fast oscillations, importance sampling schemes for small noise diffusions have been recently considered in \cite{DupuisSpiliopoulosZhou2013, DupuisSpiliopoulos2014, Spiliopoulos2014a, VandenEijndenWeare}. The authors in \cite{DupuisSpiliopoulosZhou2013, DupuisSpiliopoulos2014, Spiliopoulos2014a} investigate the impact of metastability effects in the design and analysis of non-asymptotic provably-efficient importance sampling schemes for small noise diffusions, but without multiple scales.  It was found there that in the presence of metastability, naive changes of measure could perform poorly even if they are asymptotically optimal. Hence, in these problems, non-asymptotic optimality is studied and importance sampling schemes are constructed that have guaranteed good performance even non-asymptotically. However, these papers do not study the effect of multiple scales, see Remark \ref{R:MetastabilityEffect}.  In the case of a periodic fast motion, the design of large deviations inspired efficient Monte Carlo importance sampling schemes was investigated in \cite{DupuisSpiliopoulosWang,DupuisSpiliopoulosWang2,Spiliopoulos2013}.  The present work is also related to the theory of random homogenization of HJB equations
\cite{KomorowskiLandimOlla2012, KosyginaRezakhanlouVaradhan, Kozlov1979, LionsSouganidis2006,
Olla1994,OllaSiri2004,Osada1983,Osada1987, PapanicolaouVaradhan1982,Papaanicolaou1994, Rhodes2009a}. Other related models where the regime of interest is $\epsilon/\delta\uparrow \infty$
have been considered in \cite{Baldi, DupuisSpiliopoulos, DupuisSpiliopoulosWang,  FS, HorieIshii, Spiliopoulos2013}.
In the current paper, we consider the case of a random environment which complicates the analysis. Nevertheless, in the end we are able to construct logarithmic asymptotically efficient importance sampling changes of measure.

Using the quenched large deviation and weak convergence results from \cite{Spiliopoulos2014}, we construct
asymptotically optimal importance sampling schemes with rigorous bounds on
performance. The construction is based on two ingredients.  The first ingredient is the gradient of subsolutions to the associated homogenized Hamilton-Jacobi-Bellman (HJB) equation as in \cite{DupuisWang2}.
But, this constructions is relevant only after necessary modifications that take into account the multiscale aspect of the problem.
As it is also the case for the periodic medium, see \cite{DupuisSpiliopoulosWang},   changes of measure that
are purely based on the homogenized system and directly suggested by its
associated partial differential equation do not lead to efficient importance
sampling schemes. The second ingredient is used only when $b\neq0$ (see Remark \ref{R:NoUnboundedDriftTerm}) and is the macroscopic problem, or otherwise corrector, from the random homogenization theory. In the case $b\neq 0$, the corrector needs to be taken into account in order to achieve asymptotic optimality.

This construction  is motivated by the quenched large deviations results of  \cite{Spiliopoulos2014}, where a
change of measure (or equivalently a control) in partial feedback form has to
be used to prove a large deviation lower bound.  In this  paper we define a control which is of  feedback form, i.e., a function of both the slow variable $X^{\epsilon}$
and the fast variable $Y^{\epsilon}$, and which is used to construct dynamic
importance sampling schemes with precise asymptotic performance bounds.

Before closing this section we remark that model (\ref{Eq:Main}) contains as special cases many classical cases of interest. For example, if $b=g=0$ and we denote $\frac{\epsilon}{\delta^{2}}=\frac{1}{\eta}\gg 1$, then we get the system
\begin{eqnarray}
dX_{t}&=& c\left(  X_{t}%
,Y_{t},\gamma\right)   dt+\sqrt{\epsilon}%
\sigma\left(  X_{t},Y_{t},\gamma\right)
dW_{t},\nonumber\\
dY_{t}&=&\frac{1}{\eta}f\left(Y_{t},\gamma\right) dt+\frac{1}{\sqrt{\eta}}\left[
\tau_{1}\left(  Y_{t},\gamma\right)
dW_{t}+\tau_{2}\left(Y_{t},\gamma\right)dB_{t}\right], \label{Eq:MainEx1}\\
X_{0}&=&x_{0},\hspace{0.2cm}Y_{0}=y_{0}\nonumber
\end{eqnarray}
which can be interpreted as a small noise perturbation of the random dynamical system $\dot{X}=c(X,Y,\gamma)$, where $Y$ is stochastic and fast oscillating.  Moreover, if one considers the classical system, see for example \cite{BLP,PS}, of slow-fast variables
\begin{eqnarray}
dX_{t}&=&\left[  \frac{1}{\delta}b\left(Y_{t},\gamma\right)+c\left(  X_{t}%
,Y_{t},\gamma\right)\right]   dt+%
\sigma\left(  X_{t},Y_{t},\gamma\right)
dW_{t},\nonumber\\
dY_{t}&=&\frac{1}{\delta^{2}}f\left(Y_{t},\gamma\right) dt+\frac{1}{\delta}\left[
\tau_{1}\left(  Y_{t},\gamma\right)
dW_{t}+\tau_{2}\left(Y_{t},\gamma\right)dB_{t}\right], \label{Eq:MainEx2}\\
X_{0}&=&x_{0},\hspace{0.2cm}Y_{0}=y_{0}\nonumber
\end{eqnarray}
and rescale time $t\mapsto\epsilon t$, say if interested in small time asymptotics, then the process $(X^{\epsilon}_{s},Y^{\epsilon}_{s})=(X_{\epsilon s},Y_{\epsilon s})$ satisfies  (\ref{Eq:Main}) with $g=0$ and $c(x,y,\gamma)$ replaced by the lower order term $\epsilon c(x,y,\gamma)$ (which also vanishes in the limit).

In relation to (\ref{Eq:Main}), a special case of interest is the choice $b(y,\gamma)=f(y,\gamma)=-\nabla_{y}Q(y,\gamma)$  for an ergodic and stationary random field $Q(\cdot)$, $c(x,y,\gamma)=-\nabla_{x}V(x)$,  $\sigma(x,y,\gamma)=\tau_{1}(y,\gamma)=\sqrt{2D}$ and $\tau_{2}(y,\gamma)=0$. In this case, we simply have $Y^{\epsilon}_{t}=X^{\epsilon}_{t}/\delta$ and the model can be interpreted as  diffusion in the rough potential $\epsilon Q(x/\delta,\gamma)+ V(x)$, where the roughness is dictated by the random field $Q$, see \cite{Zwanzig} and \cite{DupuisSpiliopoulosWang,DupuisSpiliopoulosWang2} for further numerical studies. The random field $Q(\cdot)$ can be taken to be Gaussian with some specific correlation structure, see Section \ref{S:Examples}.

The rest of the paper is organized as follows. In Section \ref{S:Notation} we set-up notation, pose the main assumptions of the paper and recall the definition and well-known properties of the random environment that will be used in this paper. In Section \ref{S:LDPresults}, we recall the large deviations results of \cite{Spiliopoulos2014} that are used in this paper. We also review there the concept of importance sampling and that of classical subsolutions to related HJB equations. Section \ref{S:MainResult} contains the main result of this paper and its proof, this is Theorem \ref{T:UniformlyLogEfficient}. Section \ref{S:Examples} has an illustrating example and a simulation study to illustrate the main theoretical results of this paper.  In Appendix \ref{App:AuxiliaryTechnicalResults} the proof of a necessary technical result due to the randomness of the environment is given. 

\section{Notation and description of the random environment}\label{S:Notation}
In this section, we describe in detail the random environment and its properties. Also we set-up notation and present the main assumptions of the paper. The material of this section is classical in the random homogenization literature, e.g., \cite{JikovKozlovOleinik1994,Olla1994,KomorowskiLandimOlla2012}, and we review here the results that are being used in the later sections of the paper. We conclude this section with two examples.

Since we are dealing with random environments, we need to make certain assumptions on the random environment that will guarantee the necessary ergodic theorems that we need. Based on \cite{JikovKozlovOleinik1994}, we assume that there is a group of measure preserving transformation $\{\tau_{y}, y\in\mathbb{R}^{d-m}\}$ acting ergodically on $\Gamma$ that is defined as follows.
\begin{definition}
\label{Def:medium}
\begin{enumerate}
\item {$\tau_{y}$ preserves the measure, namely $\forall y\in\mathbb{R}^{d-m}$
and $\forall A\in\mathcal{G}$ we have $\nu(\tau_{y}A)=\nu(A)$.}

\item {The action of $\{\tau_{y}: y\in\mathbb{R}^{d-m}\}$ is ergodic, that is if
$A=\tau_{y}A$ for every $y\in\mathbb{R}^{d}$ then $\nu(A)=0$ or $1$.}

\item { For every measurable function $f$ on $\left(  \Gamma, \mathcal{G},
\nu\right)  $, the function $(y,\gamma)\mapsto f(\tau_{y}\gamma)$ is
measurable on $\left(  \mathbb{R}^{d-m}\times\Gamma, \mathbb{B}(\mathbb{R}%
^{d-m})\otimes\mathcal{G}\right)  $.}
\end{enumerate}
\end{definition}

Letting $\tilde{\phi}$ be a square integrable function in $\Gamma$, we define the operator $T_{y}\tilde{\phi}(\gamma)=\tilde{\phi}(\tau_{y}\gamma)$, which is a strongly continuous group of unitary maps in
$L^{2}(\Gamma)$, see \cite{Olla1994}. In addition, we shall denote by $D_{i}$ the infinitesimal generator  of $T_{y}$ in the direction
$i$, which is a closed and densely defined generator, see \cite{Olla1994}.

In order to guarantee that involved functions are ergodic and stationary random fields on $\mathbb{R}^{d-m}$,
for $\tilde{\phi}\in L^{2}(\Gamma)$, we define $\phi(y,\gamma)=\tilde{\phi}(\tau_{y}\gamma)$.  Similarly, for a measurable function $\tilde{\phi}:\mathbb{R}^{m}\times\Gamma\mapsto\mathbb{R}^{m}$
we consider the (locally) stationary random field $(x,y) \mapsto \tilde{\phi}(x,\tau_{y}\gamma)=\phi(x,y,\gamma)$. Then, it is guaranteed that $\phi(y,\gamma)$ (respectively $\phi(x,y,\gamma)$) is a stationary (respectively locally stationary) ergodic random field.

The coefficients, $b,c,\sigma,f,g,\tau_{1},\tau_{2}$ of (\ref{Eq:Main}) are defined through this procedure and therefore are guaranteed to be ergodic and stationary random fields. In particular, we start with $L^{2}(\Gamma)$ functions $\tilde{b}(\gamma),\tilde{c}(x,\gamma),\tilde{\sigma}(x,\gamma),\tilde{f}(\gamma),\tilde{g}(x,\gamma)$, $\tilde{\tau}_{1}(\gamma),\tilde{\tau}_{2}(\gamma)$ and we define the coefficients of (\ref{Eq:Main}) via the relations
$b(y,\gamma)=\tilde{b}(\tau_{y}\gamma),c(x,y,\gamma)=\tilde{c}(x,\tau_{y}\gamma),\sigma(x,y,\gamma)=\tilde{\sigma}(x,\tau_{y}\gamma),f(y,\gamma)=\tilde{f}(\tau_{y}\gamma),
g(x,y,\gamma)=\tilde{g}(x,\tau_{y}\gamma),\tau_{1}(y,\gamma)=\tilde{\tau}_{1}(\tau_{y}\gamma)$ and $\tau_{2}(y,\gamma)=\tilde{\tau}_{2}(\tau_{y}\gamma)$.

We need to make certain assumptions both on the coefficients of (\ref{Eq:Main}) and on the properties of the random environment. In regards to the coefficients of (\ref{Eq:Main}) we assume that
\begin{condition}
\label{A:Assumption1}

\begin{enumerate}
\item For every fixed $\gamma\in\Gamma$, the diffusion matrices $\sigma\sigma^{T}$ and $\tau_{1}\tau_{1}^{T}+\tau_{2}\tau_{2}^{T}$ are uniformly nondegenerate.
\item The functions $b(y,\gamma),c(x,y,\gamma),\sigma(x,y,\gamma), f(y,\gamma), g(x,y,\gamma), \tau_{1}(y,\gamma)$ and $\tau_{2}(y,\gamma)$ are
$C^{1}(\mathbb{R}^{d-m})$ in $y$ and $C^{1}(\mathbb{R}^{m})$ in $x$ with all
partial derivatives Lipschitz continuous and globally bounded in $x$ and $y $.
\end{enumerate}
\end{condition}

For every $\gamma\in\Gamma$, we define next the operator
\[
\mathcal{L}^{\gamma}=f(y,\gamma)\nabla_{y}\cdot+\text{\emph{tr}}\left[\left(
\tau_{1}(y,\gamma)\tau^{T}_{1}(y,\gamma)+\tau_{2}(y,\gamma)\tau^{T}_{2}(y,\gamma)\right)\nabla_{y}\nabla_{y}\cdot\right] \label{Def:OperatorFastProcess}
\]
and we let $Y_{t,\gamma}$ to be the corresponding Markov process.

As it is common in the random homogenization literature, \cite{PapanicolaouVaradhan1982,Osada1983,Olla1994}, we can associate the
canonical process on $\Gamma$ defined by the environment $\gamma$. This is  a Markov process on $\Gamma$, denoted by $\gamma_{t}$, with continuous
transition probability densities with respect to $d$-dimensional Lebesgue
measure, e.g., \cite{Olla1994}. To be precise, we define
\begin{align*}
\gamma_{t}  &  =\tau_{Y_{t,\gamma}}\gamma\label{Eq:EnvironmentProcess}\\
\gamma_{0}  &  =\tau_{y_{0}}\gamma\nonumber
\end{align*}

It is relatively straightforward to see that  the infinitesimal generator of the
Markov process $\gamma_{t}$ is given by
\[
\tilde{L}=\tilde{f}(\gamma)D\cdot+\text{\emph{tr}}\left[ \left( \tilde{\tau
}_{1}(\gamma)\tilde{\tau}_{1}^{T}(\gamma)+\tilde{\tau
}_{2}(\gamma)\tilde{\tau}_{2}^{T}(\gamma)\right)D^{2}\cdot\right].
\]

Next, we would like to have a closed form for the unique ergodic invariant measure for the environment process
$\{\gamma_{t}\}_{t\geq0}$. Such a closed form is useful for numerical feasibility purposes. For this reason,  we assume the following condition on the structure of the operator $\tilde{L}$, see \cite{Osada1983}.

\begin{condition}
\label{A:Assumption2} We can write the operator $\tilde{L}$ in the following generalized divergence form
\[
\tilde{L}=\frac{1}{\tilde{m}(\gamma)}\left[  \sum_{i,j}D_{i}\left(  \tilde
{a}\tilde{a}_{i,j}^{T}(\gamma)D_{j}\cdot\right)  +\sum_{j}\tilde{\beta}%
_{j}(\gamma)D_{j}\cdot\right]
\]
where $\tilde{\beta}_{j}=\tilde{m}\tilde{f}_{j}-\sum_{i}D_{i}\left(
\left(\tilde{\tau}_{1}\tilde{\tau}_{1}^{T}+\tilde{\tau}_{2}\tilde{\tau}_{2}^{T}\right)_{i,j}\tilde{m}\right)  $ and $\tilde{a}%
\tilde{a}_{i,j}^{T}=
\left(\tilde{\tau}_{1}\tilde{\tau}_{1}^{T}+\tilde{\tau}_{2}\tilde{\tau}_{2}^{T}\right)_{i,j}\tilde{m}$. We
assume that $\tilde{m}(\gamma)$ is bounded from below and from above with
probability $1$, that there exist smooth $\tilde{d}_{i,j}(\gamma)$ such that
$\tilde{\beta}_{j}=\sum_{j}D_{j}\tilde{d}_{i,j}$ with $|\tilde{d}_{i,j}|\leq
M$ for some $M<\infty$ almost surely and
\[
\text{div }\tilde{\beta}=0\text{ in distribution},\quad\text{i.e.,}%
\int_{\Gamma}\sum_{j=1}^{d}\tilde{\beta}_{j}(\gamma)D_{j}\tilde{\phi}%
(\gamma)\nu(d\gamma)=0,\quad\forall\tilde{\phi}\in\mathcal{H}^{1},
\]
where the Sobolev space $\mathcal{H}^{1}=\mathcal{H}^{1}(\nu)$ is the Hilbert space  equipped with the
inner product
\[
(\tilde{f},\tilde{g})_{1}=\sum_{i=1}^{d}(D_{i}\tilde{f},D_{i}\tilde{g}).
\]

\end{condition}

We will denote by $\mathbb{E}^{\nu}$  the expectation
operator with respect to the measure $\nu$. In particular, we have the following key result, Proposition \ref{P:NewMeasureRandomCase}.
\begin{proposition}
[\cite{Osada1983} and Theorem 2.1 in \cite{Olla1994}]%
\label{P:NewMeasureRandomCase} Assume Conditions \ref{A:Assumption1} and
\ref{A:Assumption2}. Define a measure on $(\Gamma,\mathcal{G})$ by%
\[
\pi(d\gamma)\doteq\frac{\tilde{m}(\gamma)}{\mathbb{E}^{\nu}\tilde{m}(\cdot
)}\nu(d\gamma).
\]
Then $\pi$ is the unique ergodic invariant measure for the environment process
$\{\gamma_{t}\}_{t\geq0}$.
\end{proposition}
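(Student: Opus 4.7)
The argument splits into verifying that $\pi$ is invariant for the semigroup of $\{\gamma_{t}\}_{t\geq 0}$, and then establishing ergodicity, which yields uniqueness.

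First I would verify invariance by showing that $\int_{\Gamma} \tilde{L}\tilde{\phi}\,\pi(d\gamma)=0$ for every $\tilde{\phi}$ in a dense subclass of the domain of $\tilde{L}$ (say, cylindrical functions in $\mathcal{H}^{1}$). Since $\pi(d\gamma) = \tilde{m}(\gamma)\nu(d\gamma)/\mathbb{E}^{\nu}\tilde{m}(\cdot)$, the factor $\tilde{m}(\gamma)$ cancels the $1/\tilde{m}(\gamma)$ out front in $\tilde{L}$, and the claim reduces to
\[
\int_{\Gamma}\Biggl[\sum_{i,j}D_{i}\bigl(\tilde{a}\tilde{a}^{T}_{i,j}(\gamma)\,D_{j}\tilde{\phi}\bigr)+\sum_{j}\tilde{\beta}_{j}(\gamma)\,D_{j}\tilde{\phi}\Biggr]\nu(d\gamma)=0.
\]
The second sum vanishes by the hypothesis $\mathrm{div}\,\tilde{\beta}=0$ in distribution from Condition \ref{A:Assumption2}, applied directly with test function $\tilde{\phi}\in\mathcal{H}^{1}$. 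For the first sum I would use that $\{T_{y}\}$ is a strongly continuous unitary group on $L^{2}(\Gamma,\nu)$, so each generator $D_{i}$ is skew-adjoint; applying integration by parts against the constant test function $1$ gives $\int_{\Gamma}D_{i}h\,\nu(d\gamma)=-\int_{\Gamma}h\,D_{i}(1)\,\nu(d\gamma)=0$ with $h=\tilde{a}\tilde{a}^{T}_{i,j}D_{j}\tilde{\phi}$, since $T_{y}$ fixes constants.

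For uniqueness, I would show that any bounded $u$ invariant under the semigroup $P_{t}$ of $\gamma_{t}$ is $\pi$-a.s.\ constant. Because $\gamma_{t}=\tau_{Y_{t,\gamma}}\gamma$ by construction, invariance gives $u(\gamma)=\mathbb{E}[u(\tau_{Y_{t,\gamma}}\gamma)]$. Uniform nondegeneracy of $\tau_{1}\tau_{1}^{T}+\tau_{2}\tau_{2}^{T}$ in Condition \ref{A:Assumption1}(i) together with the regularity in Condition \ref{A:Assumption1}(ii) guarantees that $Y_{t,\gamma}$ has a smooth, strictly positive transition density, and from this one concludes $u(\tau_{y}\gamma)=u(\gamma)$ for $\nu$-a.e.\ $\gamma$ and Lebesgue-a.e.\ $y\in\mathbb{R}^{d-m}$. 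The ergodicity of the action $\{\tau_{y}\}$ asserted in Definition \ref{Def:medium}(ii) then forces $u$ to be $\nu$-a.s.\ constant, hence $\pi$-a.s.\ constant since $\pi$ and $\nu$ are mutually absolutely continuous (as $\tilde{m}$ is bounded away from $0$ and $\infty$).

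The principal obstacle is the rigorous functional-analytic justification of the two integration-by-parts identities within $\mathcal{H}^{1}(\nu)$: one must check that $\tilde{a}\tilde{a}^{T}_{i,j}D_{j}\tilde{\phi}$ lies in the appropriate domain so that skew-adjointness of $D_{i}$ can be invoked with $1$ as a test function, and that the distributional statement $\mathrm{div}\,\tilde{\beta}=0$ extends by density to the class of test functions arising from $\tilde{L}\tilde{\phi}$. The representation $\tilde{\beta}_{j}=\sum_{i}D_{i}\tilde{d}_{i,j}$ with the uniformly bounded, smooth potentials $\tilde{d}_{i,j}$ in Condition \ref{A:Assumption2}, together with the two-sided bounds on $\tilde{m}$, is precisely what makes these identities meaningful and allows the density argument to close.
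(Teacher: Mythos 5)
The paper does not prove this proposition; it simply cites Osada (1983) and Theorem~2.1 of Olla's lecture notes, so there is no in-paper argument to compare against. Your sketch of the invariance step is correct and is indeed the standard argument: after the $\tilde{m}$ factor in $\pi$ cancels the $1/\tilde{m}$ prefactor of $\tilde{L}$, skew-adjointness of the generators $D_{i}$ on $L^{2}(\nu)$ (with $D_{i}1=0$) disposes of the divergence-form term, and the distributional condition $\mathrm{div}\,\tilde{\beta}=0$ from Condition~\ref{A:Assumption2} kills the drift term, yielding $\int_{\Gamma}\tilde{L}\tilde{\phi}\,d\pi=0$.

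The ergodicity and uniqueness part, however, glosses over two genuine points. First, from $P_{t}u=u$ together with a strictly positive transition density you cannot conclude directly that $u(\tau_{y}\gamma)=u(\gamma)$: the identity $u(\gamma)=\int u(\tau_{y}\gamma)\,p_{t}(y;\gamma)\,dy$ only says $u(\gamma)$ is a strictly positive average over the orbit, not that $u$ is constant on it. The missing step uses invariance of $\pi$ (established in your first part): integrate the pointwise Jensen inequality $P_{t}u^{2}-(P_{t}u)^{2}\geq 0$ against $\pi$, use $\int P_{t}u^{2}\,d\pi=\int u^{2}\,d\pi$ and $P_{t}u=u$ to force equality $\pi$-a.e., i.e.\ vanishing conditional variance; only then does positivity of the density give $u(\tau_{y}\gamma)=u(\gamma)$ for a.e.\ $(y,\gamma)$. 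Second, showing that bounded $P_{t}$-invariant functions are $\pi$-a.s.\ constant proves that $\pi$ is ergodic, but ergodicity of one invariant measure does not by itself yield uniqueness: two ergodic invariant measures could be mutually singular. Uniqueness requires the additional fact that any invariant probability measure for $\{\gamma_{t}\}$ must be absolutely continuous with respect to $\nu$ (which one derives from the regularizing/irreducibility properties of the nondegenerate diffusion along orbits); once absolute continuity is in hand, ergodicity pins the measure down to $\pi$. Both points are standard but should be named explicitly to make the proof complete.
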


We remark here that since $\tilde{m}$ is bounded from above and from below, $\mathcal{H}^{1}(\nu)$ and $\mathcal{H}^{1}(\pi)$ are equivalent.  The last tool that we need to introduce is the macroscopic problem, known as cell problem in the periodic homogenization literature or corrector in the homogenization literature in general. This is needed in the simulation algorithm only when $\tilde{b}\neq0$. Let us consider $\rho>0$ and consider the following problem on $\Gamma$
\begin{equation}
\rho\tilde{\chi}_{\rho}-\tilde{L}\tilde{\chi}_{\rho}=\tilde{b}.
\label{Eq:RandomCellProblem}%
\end{equation}

Let us assume that $\tilde{b}\in L^{2}(\nu)$ with $\left\Vert \tilde{b}\right\Vert_{-1}<\infty$, where $\left\Vert \cdot\right\Vert_{-1}$, is the natural norm of the dual space to $\mathcal{H}^{1}$ (see \cite{Olla1994, KomorowskiLandimOlla2012} for more details on  these spaces). By Lax-Milgram lemma, see \cite{Olla1994, KomorowskiLandimOlla2012},
equation (\ref{Eq:RandomCellProblem}) has a unique weak solution in the abstract Sobolev space
$\mathcal{H}^{1}$ or equivalently in $\mathcal{H}^{1}(\pi)$. At this point, we note that in the periodic case one also considers (\ref{Eq:RandomCellProblem}), but one can then take $\rho=0$ given that $b$ averages to zero when is integrated against the invariant measure $\pi$. However, in the random case, (\ref{Eq:RandomCellProblem}) with $\rho=0$ does not necessarily has a well defined solution (even if $b$ averages to zero when is integrated against the invariant measure $\pi$), see for example \cite{KomorowskiLandimOlla2012}. If there is special structure to the problem, as in the example of Section \ref{S:Examples}, one can solve or approximate the solution to (\ref{Eq:RandomCellProblem}) even with $\rho=0$. But in the general case, we consider the equation with $\rho>0$ and in the end, the homogenization theorem is proven by taking appropriate sequences $\rho=\rho(\epsilon)$ such that $\rho(\epsilon)\downarrow 0$ as $\epsilon\downarrow 0$. Taking $\rho\downarrow 0$ is allowed by the following well known properties of the solution to (\ref{Eq:RandomCellProblem}), (see \cite{Olla1994,Osada1983,PapanicolaouVaradhan1982}),
\begin{enumerate}
\item{There is a constant $K$ that is independent of $\rho$ such that
\[
\rho\mathbb{E}^{\pi}\left[  \tilde{\chi}_{\rho}(\cdot)\right]  ^{2}%
+\mathbb{E}^{\pi}\left[  D\tilde{\chi}_{\rho}(\cdot)\right]  ^{2}\leq K
\]}
\item{$\tilde{\chi}_{\rho}$ has an $\mathcal{H}^{1}$
strong limit, i.e., there exists a $\tilde{\chi}_{0}\in\mathcal{H}^{1}(\pi)$
such that
\[
\lim_{\rho\downarrow0}\left\Vert \tilde{\chi}_{\rho}(\cdot)-\tilde{\chi}%
_{0}(\cdot)\right\Vert _{1}=0\quad\text{ and }\quad \lim_{\rho\downarrow0}\rho\mathbb{E}^{\pi}\left[  \tilde{\chi}_{\rho}%
(\cdot)\right]  ^{2}=0.
\]
For notational convenience we set $\tilde{\xi}=D\tilde{\chi}_{0}$.}
\end{enumerate}
An important point coming our of Theorem \ref{T:UniformlyLogEfficient} is that the optimal control does not use $\chi_{\rho}$ itself, but its gradient $D\chi_{\rho}$.

We conclude this section with two representative examples. The first example, Example \ref{Ex:PeriodicCase}, indicates that in our setting the periodic case can be essentially viewed as a special case of the current set-up. The second example, Example \ref{Ex:RandomCase}, is a typical case where the operator of the fast process satisfies Condition \ref{A:Assumption2} and thus by Proposition \ref{P:NewMeasureRandomCase}, there is a closed form for the corresponding unique ergodic invariant measure.
\begin{example}\label{Ex:PeriodicCase}
In the periodic case, say with period $1$, $\Gamma$ is the unit
torus and $\nu$ is Lebesgue measure on $\Gamma$. Moreover, for every
$\gamma\in\Gamma$ we can consider the shift operators $\tau_{y}\gamma
=(y+\gamma)\mod 1$ and obtain $f(y,\gamma)=\tilde{f}(y+\gamma)$ for a periodic
function $\tilde{f}$ with period $1$. Hence, the periodic case is a special case of the case considered in this paper. Large deviations and related importance sampling schemes for the periodic case, were considered in \cite{Spiliopoulos2013}. Simulation results for closely related problems for diffusions with multiple scales in periodic environments can be found in \cite{DupuisSpiliopoulosWang,DupuisSpiliopoulosWang2}.
\end{example}

\begin{example}\label{Ex:RandomCase}
Let $\Gamma$ be the space of all
$C^{2}$ vector fields equipped with the Fr\'{e}chet metric. $\mathcal{G}$ is
the Borel $\sigma-$algebra, $\tau_{y}\gamma(\cdot)=\gamma(y+\cdot)$ for every
$(y,\gamma)\in\mathbb{R}^{d}\times\Gamma$ and $\nu$ is a Borel probability
measure that is invariant under the group of shifts $\tau_{y}$.

A particular example of fast dynamics of interest, that is studied in more detail in Section
\ref{S:Examples}, is the gradient case. In particular, let us choose $\tilde{f}(\gamma)=-D
\tilde{Q}(\gamma)$, where $\tilde{Q}$ is such that $f(y,\cdot)=\tilde{f}(\tau_{y}\cdot)$ is in $C^{1}(\mathbb{R}^{d-m})$, and $\tilde{\tau}_{1}(\gamma)=\sqrt{2D}\theta=\text{constant}$ and $\tilde{\tau}_{2}(\gamma)=\sqrt{2D}\sqrt{1-\theta^{2}}=\text{constant}$.

Then, it is easy to see  that
$\tilde{m}(\gamma)=\exp[-\tilde{Q}(\gamma)/D]$ defines the unique ergodic invariant measure for the environment process $\{\gamma_{t}\}_{t\geq 0}$ of Proposition \ref{P:NewMeasureRandomCase} and $\tilde{\beta}_{j}=0$
for all $1\leq j\leq d$. This example will be discussed in more details  in Section \ref{S:Examples} with simulation results as well.
\end{example}

\section{Related large deviations results and preliminaries on importance sampling}\label{S:LDPresults}

In this section we review the large deviations results of \cite{Spiliopoulos2014} that guide the design of provably-efficient importance sampling schemes. Then, we recall the general problem of constructing efficient importance sampling based on constructing appropriate subsolutions to the corresponding HJB equation.

\subsection{Related large deviations behavior}
As it is proven in \cite{Spiliopoulos2014}, see also Theorem \ref{T:MainTheorem3} below, the imposed assumptions guarantee that the family $\{X^{\epsilon,\gamma}, \epsilon>0\}$ satisfies a quenched large deviations principle, i.e., almost sure with respect to the random environment $\gamma\in \Gamma$. Let us denote the corresponding action functional by $S(\phi)$. At a heuristic level this means that given any deterministic path $\{\phi_{s}, s\in[t_{0},T]\}$, the probability that the process $\{X^{\epsilon,\gamma}_{s},s\in[t_{0},T]\}$ is in the small neighborhood of $\phi_{\cdot}$ is approximately $e^{-\frac{1}{\epsilon}S(\phi)}$. Large deviations theory is a well developed subject in probability theory and in particular for the case of small noise diffusion the interested reader is referred to the classical manuscript \cite{FW1}. In our case, the large deviations action functional is given in Theorem \ref{T:MainTheorem3}. But before mentioning the large deviations result, we briefly mention for completeness the homogenization theorem. We have

\begin{theorem}[Theorem 3.3 in \cite{Spiliopoulos2014}]\label{T:QuenchedLLN}
Let $\{\left(X^{\epsilon,\gamma}_{s},Y^{\epsilon,\gamma}_{s}\right),\epsilon>0, s\in[t_{0},T]\}$ be, for fixed $\gamma\in\Gamma$, the unique strong
solution to (\ref{Eq:Main}) with initial point $\left(X^{\epsilon,\gamma}_{t_{0}},Y^{\epsilon,\gamma}_{t_{0}}\right)=(x_{0},y_{0})$. Under Conditions
\ref{A:Assumption1} and \ref{A:Assumption2}, $\{X^{\epsilon,\gamma},\epsilon>0\}$
converges, almost surely with respect to $\gamma\in\Gamma$, to $X^{0}$, the (deterministic) solution of the differential equation
\begin{equation*}
X^{0}_{s}=x_{0}+\int_{t_{0}}^{s}r(X^{0}_{\kappa})d\kappa
\end{equation*}
where
\[
r(x)=\lim_{\rho\downarrow0}\mathbb{E}^{\pi}\left[  \tilde{c}(x,\cdot) +D\tilde{\chi}_{\rho
}(\cdot) \tilde{g}(x,\cdot)\right]  =\mathbb{E}^{\pi}[\tilde{c}(x,\cdot)+\tilde{\xi}%
(\cdot)\tilde{g}(x,\cdot)]
\]
and $\rho=\rho(\epsilon)=\frac{\delta^{2}}{\epsilon}$.
\end{theorem}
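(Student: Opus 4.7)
The plan is to pass to the limit $\epsilon\downarrow 0$ in the integral form
\begin{equation*}
X^{\epsilon,\gamma}_t = x_0 + \int_{t_0}^t\Bigl[\tfrac{\epsilon}{\delta}b(Y^{\epsilon,\gamma}_s,\gamma)+c(X^{\epsilon,\gamma}_s,Y^{\epsilon,\gamma}_s,\gamma)\Bigr]ds+\sqrt{\epsilon}\int_{t_0}^t\sigma(X^{\epsilon,\gamma}_s,Y^{\epsilon,\gamma}_s,\gamma)\,dW_s.
\end{equation*}
The stochastic integral vanishes uniformly on $[t_0,T]$ in probability by Burkholder--Davis--Gundy and the boundedness from Condition \ref{A:Assumption1}. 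The $c$-term is bounded and only needs averaging, whereas the $(\epsilon/\delta)b$-term is singular and must be tamed first. The whole argument thus rests on (i) absorbing the singular drift via a corrector, and (ii) executing a quenched averaging of what remains against the invariant measure $\pi$ of the environment process $\gamma_t=\tau_{Y_t}\gamma$ produced in Proposition \ref{P:NewMeasureRandomCase}.

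For (i), choose $\rho=\rho(\epsilon)=\delta^2/\epsilon\downarrow 0$, let $\tilde{\chi}_\rho$ be the weak solution of (\ref{Eq:RandomCellProblem}), and set $\chi_\rho(y,\gamma)=\tilde{\chi}_\rho(\tau_y\gamma)$. It\^o's formula applied to $\delta\,\chi_\rho(Y^{\epsilon,\gamma}_t,\gamma)$, combined with the identity $\mathcal{L}^\gamma\chi_\rho=\rho\chi_\rho-b$ and the scaling $\epsilon\rho/\delta^2=1$, rearranges to
\begin{equation*}
\int_{t_0}^t\tfrac{\epsilon}{\delta}b(Y^{\epsilon,\gamma}_s,\gamma)\,ds = \int_{t_0}^t g(X^{\epsilon,\gamma}_s,Y^{\epsilon,\gamma}_s,\gamma)\cdot\nabla_y\chi_\rho(Y^{\epsilon,\gamma}_s,\gamma)\,ds + R^\epsilon_t,
\end{equation*}
where $R^\epsilon_t$ collects a boundary contribution $\delta[\chi_\rho(y_0,\gamma)-\chi_\rho(Y^{\epsilon,\gamma}_t,\gamma)]$, a lower-order drift $\delta\int\chi_\rho\,ds$, and a stochastic integral whose quadratic variation is $O(\epsilon\int|\nabla_y\chi_\rho|^2\,ds)$. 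The Sobolev bounds $\rho\,\mathbb{E}^\pi[\tilde{\chi}_\rho^2]\leq K$ and $\mathbb{E}^\pi[|D\tilde{\chi}_\rho|^2]\leq K$ stated in Section \ref{S:Notation} give $\mathbb{E}^\pi\sup_{t\in[t_0,T]}|R^\epsilon_t|^2\to 0$; a Borel--Cantelli argument along $\epsilon_k\downarrow 0$ then yields almost-sure convergence to zero for $\nu$-a.e.\ $\gamma$, where the transfer between $\pi$- and $\nu$-a.s.\ statements uses the two-sided bound on $\tilde{m}$ from Condition \ref{A:Assumption2}.

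Substituting back reduces the dynamics to
\begin{equation*}
X^{\epsilon,\gamma}_t = x_0 + \int_{t_0}^t\bigl[c+g\cdot\nabla_y\chi_\rho\bigr](X^{\epsilon,\gamma}_s,Y^{\epsilon,\gamma}_s,\gamma)\,ds + o(1),
\end{equation*}
with a uniformly bounded integrand, so a.s.\ tightness of $\{X^{\epsilon,\gamma}\}_\epsilon$ in $C([t_0,T],\mathbb{R}^m)$ is immediate by Arzel\`a--Ascoli. To identify the limit, rewrite the drift through the environment process $\gamma^\epsilon_s=\tau_{Y^{\epsilon,\gamma}_s}\gamma$ as $[\tilde{c}+\tilde{g}\cdot D\tilde{\chi}_\rho](X^{\epsilon,\gamma}_s,\gamma^\epsilon_s)$. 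I would then freeze $X^{\epsilon,\gamma}$ on short subintervals of length $\Delta\to 0$ and invoke the quenched ergodic theorem for $\gamma^\epsilon$ (which runs at rate $\epsilon/\delta^2\to\infty$ under $\pi$), together with the strong $\mathcal{H}^1$-convergence $D\tilde{\chi}_\rho\to\tilde{\xi}$, to obtain for $\nu$-a.e.\ $\gamma$
\begin{equation*}
\int_{t_0}^t\bigl[\tilde{c}+\tilde{g}\cdot D\tilde{\chi}_\rho\bigr](X^{\epsilon,\gamma}_s,\gamma^\epsilon_s)\,ds \;\xrightarrow[\epsilon\downarrow 0]{}\; \int_{t_0}^t\mathbb{E}^\pi\bigl[\tilde{c}(X^0_s,\cdot)+\tilde{\xi}(\cdot)\,\tilde{g}(X^0_s,\cdot)\bigr]\,ds = \int_{t_0}^t r(X^0_s)\,ds.
\end{equation*}
Lipschitz continuity of $r$, inherited from Condition \ref{A:Assumption1}, yields uniqueness for the limiting ODE and upgrades subsequential to full convergence.

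The principal obstacle is the quenched averaging step given only $L^2(\pi)$ control (not pointwise) on $D\tilde{\chi}_\rho$: the freezing argument must handle an unbounded integrand while simultaneously taking $\rho\downarrow 0$, and must rule out $\nu$-null exceptional starting configurations for $\gamma^\epsilon$. This is precisely where Condition \ref{A:Assumption2} --- through the closed-form identification of $\pi$ and the equivalence of $\pi$ and $\nu$ --- combines with the $\mathcal{H}^1$ strong convergence of $\tilde{\chi}_\rho$ from Section \ref{S:Notation} to secure the quenched limit.
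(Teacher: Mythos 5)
The paper does not prove Theorem \ref{T:QuenchedLLN}; it is quoted directly as Theorem 3.3 of \cite{Spiliopoulos2014}, and Section \ref{S:LDPresults} merely recalls it, so there is no in-paper argument to compare your sketch against. Assessed on its own terms, your outline follows the standard corrector route and the algebra is right: with $\rho=\delta^{2}/\epsilon$, It\^{o} applied to $\delta\chi_\rho(Y^{\epsilon}_t,\gamma)$ together with $\mathcal{L}^\gamma\chi_\rho=\rho\chi_\rho-b$ and $\epsilon\rho/\delta^{2}=1$ does absorb the singular drift, and the residual $R^\epsilon_t$ is exactly the boundary term $\delta[\chi_\rho(y_0,\gamma)-\chi_\rho(Y^\epsilon_t,\gamma)]$, the lower-order drift $\delta\int\chi_\rho\,ds$, and a martingale with quadratic variation $O(\epsilon\int|\nabla_y\chi_\rho|^{2}ds)$. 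This is certainly the correct skeleton.

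Two substantive gaps remain, both living in what you yourself flag as the ``principal obstacle,'' and neither is filled by the sentence you write about it. First, the claimed bound $\mathbb{E}^\pi\sup_{t}|R^\epsilon_t|^{2}\to0$ does not follow from the Sobolev estimates alone: $\rho\,\mathbb{E}^\pi[\tilde\chi_\rho^{2}]\leq K$ and $\mathbb{E}^\pi[|D\tilde\chi_\rho|^{2}]\leq K$ control $L^{2}(\pi)$ averages over $\Gamma$, whereas the terms $\chi_\rho(Y^\epsilon_t,\gamma)$ and $\int|\nabla_y\chi_\rho(Y^\epsilon_s,\gamma)|^{2}ds$ are evaluated along the random path $Y^\epsilon$; transferring the $L^{2}(\pi)$ bound onto the path requires a quenched occupation-time estimate for the environment process $\tau_{Y^\epsilon_s}\gamma$, and this is not a one-line consequence of Proposition \ref{P:NewMeasureRandomCase}. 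Second, even granting such a moment bound, your Borel--Cantelli argument yields $\nu$-a.s.\ convergence only along a subsequence $\epsilon_k\downarrow0$; to get the full family one needs a summable rate or an interpolation/monotonicity step, which you do not supply. The same two issues recur in the ``freeze-and-average'' identification of the drift, where the observable $\tilde c+\tilde g\cdot D\tilde\chi_\rho$ is unbounded and $\rho$-dependent while $\rho\downarrow0$; the quenched ergodic lemma you invoke there (in the spirit of Lemma~A.6 of \cite{Spiliopoulos2014}) is precisely the content a proof must establish, not merely cite. A minor point worth stating explicitly is that It\^{o}'s formula needs $\chi_\rho\in C^{2}$, whereas Lax--Milgram gives only an $\mathcal{H}^{1}$ weak solution; uniform ellipticity from Condition \ref{A:Assumption1} and elliptic regularity close this, but the step should appear in a complete argument.
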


Let us denote by $\mathcal{AC}([t_{0},T];\mathbb{R}^{m})$ the set of absolutely continuous functions from $[t_{0},T]$ to $\mathbb{R}^{m}$.

\begin{theorem}[Theorem 3.5 in \cite{Spiliopoulos2014}]
\label{T:MainTheorem3} Let $\{\left(X^{\epsilon,\gamma},Y^{\epsilon,\gamma}\right),\epsilon>0\}$ be, for fixed $\gamma\in\Gamma$, the unique strong
solution to (\ref{Eq:Main}). Under Conditions
\ref{A:Assumption1} and \ref{A:Assumption2}, $\{X^{\epsilon,\gamma},\epsilon>0\}$
satisfies, almost surely with respect to $\gamma\in\Gamma$, the large deviations principle with rate function
\begin{equation*}
S_{t_{0}T}(\phi)=%
\begin{cases}
\frac{1}{2}\int_{t_{0}}^{T}(\dot{\phi}_{s}-r(\phi_{s}))^{T}q^{-1}(\phi_{s}%
)(\dot{\phi}_{s}-r(\phi_{s}))ds & \text{if }\phi\in\mathcal{AC}%
([t_{0},T];\mathbb{R}^{m}) \text{  and } \phi_{t_{0}}=x_{0}\\
+\infty & \text{otherwise.}%
\end{cases}
\label{Eq:ActionFunctional1}%
\end{equation*}
where $r(x)$ is as in Theorem \ref{T:QuenchedLLN} and using again $\rho=\rho(\epsilon)=\frac{\delta^{2}}{\epsilon}$,
\begin{align*}
q(x)&=\lim_{\rho\downarrow0}\mathbb{E}^{\pi}\left[  (\tilde{\sigma}(x,\cdot)+D\tilde{\chi}_{\rho}%
(\cdot)\tilde{\tau}_{1}(\cdot))(\tilde{\sigma}(x,\cdot)+D\tilde{\chi}_{\rho}%
(\cdot)\tilde{\tau}_{1}(\cdot))^{T}+\left(D\tilde{\chi}_{\rho}%
(\cdot)\tilde{\tau}_{2}(\cdot)\right)\left(D\tilde{\chi}_{\rho}%
(\cdot)\tilde{\tau}_{2}(\cdot)\right)^{T}\right]\nonumber\\
&=\mathbb{E}^{\pi}\left[  (\tilde{\sigma}(x,\cdot)+\tilde{\xi}(\cdot)\tilde{\tau}_{1}(\cdot))(\tilde{\sigma}(x,\cdot)+\tilde{\xi}
(\cdot)\tilde{\tau}_{1}(\cdot))^{T}+\left(\tilde{\xi}
(\cdot)\tilde{\tau}_{2}(\cdot)\right)\left(\tilde{\xi}
(\cdot)\tilde{\tau}_{2}(\cdot)\right)^{T}\right]
\end{align*}
\end{theorem}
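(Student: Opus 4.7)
The plan is to prove the equivalent Laplace principle via the weak-convergence method of Dupuis and Ellis, combining a variational representation for exponential functionals with random homogenization via the corrector $\tilde{\chi}_\rho$ of (\ref{Eq:RandomCellProblem}). For bounded continuous $F$ on path space, Bou\'e--Dupuis gives
\begin{equation*}
-\epsilon \log \mathbb{E}_{x_0,y_0}\!\left[e^{-F(X^{\epsilon,\gamma})/\epsilon}\right] = \inf_{u=(u_1,u_2)} \mathbb{E}\!\left[\tfrac12\!\int_{t_0}^T (|u_1|^2+|u_2|^2)\,ds + F(\bar X^{\epsilon,\gamma,u})\right],
\end{equation*}
where $(\bar X^{\epsilon,\gamma,u},\bar Y^{\epsilon,\gamma,u})$ is the controlled analog of (\ref{Eq:Main}) obtained by adding $\sqrt\epsilon\,\sigma u_1$ to the slow drift and $(\sqrt\epsilon/\delta)(\tau_1 u_1+\tau_2 u_2)$ to the fast drift. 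The goal is to show that this converges, $\nu$-almost surely in $\gamma$, to $\inf_\phi\{S_{t_0 T}(\phi)+F(\phi)\}$.

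The key device is to apply It\^o's formula to $\bar X^{\epsilon,\gamma,u}_t + \delta\,\tilde\chi_\rho(\tau_{\bar Y^{\epsilon,\gamma,u}_t}\gamma)$ with the tuned choice $\rho=\delta^2/\epsilon$. Using $\tilde L\tilde\chi_\rho=\rho\tilde\chi_\rho-\tilde b$, a direct computation shows that the singular drift $(\epsilon/\delta)b$ cancels exactly, and the transformed slow process obeys an SDE whose drift is the bounded quantity $c(x,y,\gamma)+D\tilde\chi_\rho(\tau_y\gamma)\,g(x,y,\gamma) + \delta\,\tilde\chi_\rho(\tau_y\gamma)$ and whose diffusion coefficients are $\sigma+D\tilde\chi_\rho\tau_1$ and $D\tilde\chi_\rho\tau_2$, plus the control contributions. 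The bounds $\rho\mathbb{E}^\pi|\tilde\chi_\rho|^2\leq K$ and $\mathbb{E}^\pi|D\tilde\chi_\rho|^2\leq K$ ensure that the $\delta\,\tilde\chi_\rho$ term is asymptotically negligible, and the other coefficients are controlled in $L^2(\pi)$. This already makes the effective drift $r(x)$ and effective diffusion $q(x)$ visible as ergodic averages of exactly the combinations appearing in the theorem.

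The next step is to pass to the limit through occupation measures of the controlled environment process $\gamma^{\epsilon}_s=\tau_{\bar Y^{\epsilon,\gamma,u}_s}\gamma$ together with the controls. The finite-cost constraint forces tightness both of these measures on $\Gamma\times\mathbb{R}^{2\kappa}\times[t_0,T]$ and of the slow trajectories. Any weak limit $P$ has, by ergodicity of the environment process (Proposition \ref{P:NewMeasureRandomCase}), a $\Gamma$-marginal equal to $\pi$ for $\nu$-almost every $\gamma$. Identifying the limiting ODE for $\phi$ by averaging the effective drift and diffusion against $P$, and then minimizing $\tfrac12\int(|u_1|^2+|u_2|^2)\,dP$ under the resulting affine constraint on $\dot\phi-r(\phi)$, produces the quadratic form $\tfrac12(\dot\phi-r(\phi))^Tq^{-1}(\phi)(\dot\phi-r(\phi))$ and hence the Laplace lower bound. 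The matching upper bound comes from constructing near-optimal feedback controls using mollifications of $D\tilde\chi_\rho$ together with (regularized) classical subsolutions to the homogenized HJB equation with Hamiltonian $-p\cdot r - \tfrac12 p^Tqp$.

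The hard part is that every estimate must be quenched, simultaneously valid for small $\epsilon$ on a single $\nu$-full measure set of environments, whereas the ergodic theorem and the corrector bounds are naturally $L^2(\pi)$ statements. Thus pathwise time integrals of the form $\int_{t_0}^T\tilde\phi(\tau_{\bar Y^{\epsilon,\gamma,u}_s}\gamma)\,ds$ must be controlled without freely exchanging $\mathbb{E}^\nu$. This forces (i) exploiting the equivalence of $\mathcal{H}^1(\nu)$ and $\mathcal{H}^1(\pi)$ coming from the two-sided bound on $\tilde m$, (ii) $L^2$-in-environment variance estimates for such functionals along sub-sequences $\epsilon_n\downarrow 0$ combined with Borel--Cantelli to upgrade $L^2$ averaging to almost sure averaging, and (iii) the calibration $\rho=\delta^2/\epsilon$, which simultaneously kills the $\delta\,\tilde\chi_\rho$ It\^o correction and retains the $\mathcal{H}^1$ strong limit $\tilde\xi=D\tilde\chi_0$ needed to identify $r$ and $q$. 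The fact that in the random case a genuine corrector may fail to exist in $L^2(\pi)$ even though its gradient does is precisely why the regularized problem (\ref{Eq:RandomCellProblem}) with $\rho>0$ is used here instead of a standard cell equation.
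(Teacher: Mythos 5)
The present paper does not prove Theorem~\ref{T:MainTheorem3}: it is imported verbatim from \cite{Spiliopoulos2014}, so there is no in-paper proof against which to compare. That said, your sketch is consistent with the approach used in that reference and with the surrounding machinery visible here (the variational representation, the occupation-measure passage to the limit, the calibration $\rho=\delta^{2}/\epsilon$, Lemma~A.6 of \cite{Spiliopoulos2014} cited in the proof of Theorem~\ref{T:UniformlyLogEfficient}, and the quenched ergodic theorem for the controlled environment process). The key computations you flag are correct: with $\rho=\delta^2/\epsilon$, It\^o applied to $X^{\epsilon}_t+\delta\chi_\rho(Y^{\epsilon}_t,\gamma)$ and the cell equation $\tilde L\tilde\chi_\rho=\rho\tilde\chi_\rho-\tilde b$ annihilate the singular drift $\tfrac{\epsilon}{\delta}b$, leaving the residual $\delta\chi_\rho$ whose negligibility follows from $\rho\,\mathbb{E}^\pi|\tilde\chi_\rho|^2\leq K$, and the diffusion terms combine into exactly $\sigma+D\chi_\rho\tau_1$ and $D\chi_\rho\tau_2$, which is how $q$ arises.

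Two refinements worth noting. First, to pass from $L^2(\pi)$ averaging to a quenched statement you do need a uniform-in-control integrability estimate ensuring tightness of the controlled occupation measures; this is precisely the role played (for the importance-sampling functional) by Lemma~\ref{L:RestrictionControls} in the appendix, and an analogous control-truncation step is needed in the LDP proof — your outline alludes to this only implicitly. Second, for the Laplace upper bound one must avoid unbounded feedback controls built from $D\tilde\chi_\rho$; the mollification you mention has to be uniform in $\rho$ so that the error survives the joint limit $\epsilon,\rho\downarrow0$, which is a delicate point in the random setting where $D\tilde\chi_\rho$ converges only in $L^2(\pi)$ and not pointwise. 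These are gaps of detail, not of conception; the overall structure you propose appears to be the right one.
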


It is clear that the coefficients $r(x)$ and $q(x)$ are obtained by homogenizing (\ref{Eq:Main}) by taking $\delta\downarrow 0$ with $\epsilon$ fixed. The  form of the action functional can be recognized as the one that would come up when considering large deviations for  the homogenized system. This is also implied by the fact that $\delta$ goes to zero faster than $\epsilon$, since  $\epsilon/\delta\uparrow\infty$.

\begin{remark}
It is clear that if $b=0$, then $\chi_{\rho}=0$ and then $r(x),q(x)$ take the simplified form $r(x)=\mathbb{E}^{\pi}[\tilde{c}(x,\cdot)]$ and $q(x)=\mathbb{E}^{\pi}\left[  \tilde{\sigma}(x,\cdot) \tilde{\sigma}(x,\cdot)^{T}\right]$.
\end{remark}

In particular, Theorem \ref{T:MainTheorem3} implies the following. Let us consider $h:\mathbb{R}^{m}\mapsto\mathbb{R}$ to be a continuous and bounded function and assume that we are interested in computing
\[
\theta(\epsilon,\gamma)=\mathbb{E}\left[e^{-\frac{1}{\epsilon}h(X^{\epsilon,\gamma}_{T})}\Big | X^{\epsilon,\gamma}_{t_{0}}=x_{0}, Y^{\epsilon,\gamma}_{t_{0}}=y_{0}\right]
\]
for small $\epsilon>0$. Then, by Theorem \ref{T:MainTheorem3},  for almost every realization of the random environment $\gamma\in \Gamma$, we have that
\[
\lim_{\epsilon\downarrow 0}-\epsilon\log \theta(\epsilon,\gamma)=G(t_{0},x_{0})
\]
where
\[
G(t_{0},x_{0})=\inf_{\phi\in\mathcal{C}([t_{0},T]):\phi_{t_{0}}=x_{0}}\left\{S_{t_{0}T}(\phi)+h(\phi_{T})\right\}.
\]

Note that the limit is independent of the initial point $y$ of the fast component $Y^{\epsilon,\gamma}$.

\subsection{Generalities on importance sampling} Let us define by $\Delta^{\epsilon,\gamma}(t_{0},x_{0},y_{0})$ an unbiased estimator of $\theta(\epsilon,\gamma)$, defined on some probability space $\bar{\mathbb{P}}$ with corresponding expectation operator $\bar{\mathbb{E}}$. To be precise, we have
\[
\bar{\mathbb{E}}\Delta^{\epsilon,\gamma}(t_{0},x_{0},y_{0})=\theta(\epsilon,\gamma).
\]

As it is well known, in order to estimate $\theta(\epsilon,\gamma)$ via Monte-Carlo, one generates many independent copies of $\Delta^{\epsilon,\gamma}(t_{0},x_{0},y_{0})$ and the sample mean is the estimate.

The precise number of independent copies is related to the overall desired accuracy and is measured by the variance of the estimator. Hence, an efficient estimator is the one that has minimum variance and due to unbiasedness, minimizing the variance is the same as minimizing the second moment.  Jensen's inequality guarantees that
\[
\mathbb{E}\left[\Delta^{\epsilon,\gamma}(t_{0},x_{0},y_{0})\right]^{2}\geq \left(\bar{\mathbb{E}}\Delta^{\epsilon,\gamma}(t_{0},x_{0},y_{0})\right)^{2}=\theta(\epsilon,\gamma)^{2}
\]

On the other hand, by the large deviations theorem, we have that for almost every $\gamma\in\Gamma$
\[
\lim_{\epsilon\downarrow 0}-\epsilon\log\bar{\mathbb{E}}\left[\Delta^{\epsilon,\gamma}(t_{0},x_{0},y_{0})\right]^{2}\leq 2G(t_{0},x_{0})
\]

Therefore, if we actually have the opposite inequality as well, i.e., if for almost every $\gamma\in\Gamma$
 \[
\lim_{\epsilon\downarrow 0}-\epsilon\log\bar{\mathbb{E}}\left[\Delta^{\epsilon,\gamma}(t_{0},x_{0},y_{0})\right]^{2}\geq 2G(t_{0},x_{0}),
\]
then the estimator $\Delta^{\epsilon,\gamma}(t_{0},x_{0},y_{0})$ is called \textit{logarithmic asymptotically optimal}.

Let us next elaborate on the construction of the importance sampling scheme. For notational convenience, we define the  $2\kappa-$dimensional Wiener process $Z_{\cdot}=\left(W_{\cdot}, B_{\cdot}\right)$.
Let $u(t,x,y)$ be a control that is sufficiently smooth and bounded and may also depend on $\gamma\in \Gamma$. The family of probability measures $\bar{\mathbb{P}}$ is defined via the change of measure
\[
\frac{d\bar{\mathbb{P}}}{d\mathbb{P}}=\exp\left\{  -\frac
{1}{2\epsilon}\int_{t_{0}}^{T}\left\Vert u(s,X^{\epsilon,\gamma}_{s},Y^{\epsilon,\gamma}_{s})\right\Vert
^{2}ds+\frac{1}{\sqrt{\epsilon}}\int_{t_{0}}^{T}\left\langle u(s,X^{\epsilon,\gamma}_{s},Y^{\epsilon,\gamma}_{s}),dZ_{s}\right\rangle \right\}  .
\]
Then, Girsanov's Theorem guarantees that
\[
\bar{Z}_{s}=Z_{s}-\frac{1}{\sqrt{\epsilon}}\int_{t_{0}}^{s}u(\kappa,X^{\epsilon,\gamma
}_{\kappa},Y^{\epsilon,\gamma}_{\kappa})d\kappa,~~~s\in[t_{0},T]
\]
is a Wiener process on $[t_{0},T]$ under the probability measure $\bar
{\mathbb{P}}$. Moreover, under $\bar{\mathbb{P}}$ and $(\bar{X}^{\epsilon},\bar{Y}^{\epsilon})=(\bar{X}^{\epsilon,\gamma},\bar{Y}^{\epsilon,\gamma})$
satisfies $\bar{X}^{\epsilon}_{t_{0}}=x_{0}$, $\bar{Y}^{\epsilon}_{t_{0}}=y_{0}$ and for
$s\in(t_{0},T]$ it is the unique strong solution of
\begin{eqnarray}
d\bar{X}^{\epsilon}_{s}&=&\left[  \frac{\epsilon}{\delta}b\left(\bar{Y}^{\epsilon}_{s},\gamma\right)+c\left(  \bar{X}^{\epsilon}_{s}%
,\bar{Y}^{\epsilon}_{s},\gamma\right)+\sigma\left(  \bar{X}_{s}^{\epsilon},\bar{Y}_{s}^{\epsilon},\gamma\right)  u_{1}(s)\right]   dt+\sqrt{\epsilon}%
\sigma\left(  \bar{X}^{\epsilon}_{s},\bar{Y}^{\epsilon}_{s},\gamma\right)
d\bar{W}_{s}, \nonumber\\
d\bar{Y}^{\epsilon}_{s}&=&\frac{1}{\delta}\left[  \frac{\epsilon}{\delta}f\left(\bar{Y}^{\epsilon}_{s},\gamma\right)  +g\left(  \bar{X}^{\epsilon}_{s}
,\bar{Y}^{\epsilon}_{s},\gamma\right)+\tau_{1}\left(\bar{Y}^{\epsilon}_{s},\gamma\right)u_{1}(s)+
\tau_{2}\left(\bar{Y}^{\epsilon}_{s},\gamma\right)u_{2}(s)\right]   dt\nonumber\\
& &\hspace{5cm}+\frac{\sqrt{\epsilon}}{\delta}\left[
\tau_{1}\left(\bar{Y}^{\epsilon}_{s},\gamma\right)
d\bar{W}_{s}+\tau_{2}\left(\bar{Y}^{\epsilon}_{s},\gamma\right)d\bar{B}_{s}\right],\label{Eq:Main2}\\
\bar{X}^{\epsilon}_{t_{0}}&=&x_{0},\hspace{0.2cm}\bar{Y}^{\epsilon}_{t_{0}}=y_{0}\nonumber
\end{eqnarray}

where $(u_{1}(s), u_{2}(s))$ denote the first and second component of the control
\[
u(s,\bar{X}^{\epsilon}_{s},\bar{Y}^{\epsilon}_{s})=(u_{1}(s,\bar{X}^{\epsilon}_{s},\bar{Y}^{\epsilon}_{s}), u_{2}(s,\bar{X}^{\epsilon}_{s},\bar{Y}^{\epsilon}_{s})).
\]

Then, under $\bar{\mathbb{P}}$
\[
\Delta^{\epsilon,\gamma}(t_{0},x_{0},y_{0})=\exp\left\{  -\frac{1}{\epsilon}h(\bar{X}^{\epsilon
}_{T})\right\}  \frac{d\mathbb{P}}{d\bar{\mathbb{P}}}(\bar{X}^{\epsilon
}, \bar{Y}^{\epsilon}),
\]
is an unbiased estimator for $\theta(\epsilon,\gamma)$.  The performance of
this estimator is characterized by the decay rate of its second moment
\begin{equation}
Q^{\epsilon,\gamma}(t_{0},x_{0},y_{0};u)\doteq\bar{\mathbb{E}}\left[  \exp\left\{
-\frac{2}{\epsilon}h(\bar{X}^{\epsilon,\gamma}_{T})\right\}  \left(  \frac{d\mathbb{P}%
}{d\bar{\mathbb{P}}}(\bar{X}^{\epsilon,\gamma}, \bar{Y}^{\epsilon,\gamma})\right)  ^{2}\right].
\label{Eq:2ndMoment1}%
\end{equation}

Construction of asymptotically optimal importance sampling schemes is done by appropriately choosing the control $u$ in (\ref{Eq:2ndMoment1}). The goal is to be able to control
the behavior of the second moment $Q^{\epsilon,\gamma}(t_{0},x_{0},y_{0};u)$. As we shall see in Theorem \ref{T:UniformlyLogEfficient} the construction of controls that lead to asymptotically optimal changes of measures is based on the following two ingredients
\begin{enumerate}
\item{The solution to the macroscopic problem (\ref{Eq:RandomCellProblem}), which needs to be taken into account due to the homogenization effects, when $b\neq 0$.}
\item{The gradient of a subsolution to the PDE that the function $G(t,x)$
 satisfies. }
\end{enumerate}

The first ingredient is related to  homogenization theory for the related HJB equation. Taking into account the macroscopic problem turns out to be crucial in establishing efficiency of the importance sampling change of measure when $b\neq 0$. This point has been extensively investigated in  \cite{DupuisSpiliopoulosWang} for the periodic case. It has been shown there that omitting the corrector does not lead to asymptotic efficient schemes and actually leads to schemes with performance comparable to standard Monte Carlo, i.e., to no change of measure.  Clearly, in the more complex case where the multiscale environment is random, one does not expect something different.

The second ingredient is related to the gradient of the solution to the associated homogenized HJB equation. As we shall see in Theorem \ref{T:UniformlyLogEfficient}, the gradient of the solution to the associated  homogenized HJB equation enters in the construction of the optimal change of measure. However,  for reasons that have to do with either feasibility in computing the gradient or lack of smoothness of the solution to the HJB equation, one looks for appropriate subsolutions to the associated  homogenized HJB equation. For this reason,  let us recall the notion of a subsolution to the appropriate HJB equation. It is known that in general $G(t,x)$ is the viscosity solution to the HJB equation, see for example \cite{FlemingSoner2006},
\begin{align}
\partial_{s}G(s,x)+H(x,\nabla_{x}G(s,x))=0,\qquad G(T,x)=h(x)\label{Eq:HJBequation2}
\end{align}
where the Hamiltonian is
\[
H(x,p)=\left<r(x), p\right>-\frac{1}{2}\left\Vert q^{1/2}(x)p\right\Vert^{2}
\]
 with $r(x),q(x)$ the coefficients defined in Theorems \ref{T:QuenchedLLN} and \ref{T:MainTheorem3}. If optimal or nearly optimal schemes are overly complicated and difficult to implement, then one may choose to construct sub-optimal but simpler schemes, but with guaranteed performance. Rare events associated with multiscale problems are rather complicated
and many times is it  very difficult to construct asymptotically optimal schemes. One way to circumvent this difficulty is by constructing appropriate sub-optimal schemes
with precise bounds on asymptotic performance. One way that this is made possible is via the subsolution approach, introduced in \cite{DupuisWang2}. Let us first recall the definition of a subsolution.

\begin{definition}
\label{Def:ClassicalSubsolution} A function
$\bar{U}(s,x):[t_{0},T]\times \mathbb{R}^{m}\mapsto\mathbb{R}$ is a
classical subsolution to the HJB equation (\ref{Eq:HJBequation2}) if

\begin{enumerate}
\item $\bar{U}$ is continuously differentiable,

\item $\partial_{s}\bar{U}(s,x)+H(x,\nabla_{x}\bar{U}(s,x))\geq0$ for every
$(s,x)\in(t_{0},T)\times\mathbb{R}^{m}$,

\item $\bar{U}(T,x)\leq h(x)$ for $x\in\mathbb{R}^{m}$.
\end{enumerate}
\end{definition}

For illustration purposes and in order to avoid several technical problems, we will
 impose stronger regularity conditions on the subsolutions to
be considered than those of Definition
\ref{Def:ClassicalSubsolution}.  Nevertheless,  we would like to
mention that the uniform bounds that will be assumed in Condition
\ref{Cond:ExtraReg} can be replaced by milder conditions with the
expense of working harder to establish the results.

\begin{condition}
\label{Cond:ExtraReg} There exists a subsolution $\bar{U}$ which has continuous derivatives up to
order $1$ in $t$ and order $2$ in $x$, and the first and second
derivatives in $x$ are uniformly bounded.
\end{condition}

\section{Statement and proof of the main result}\label{S:MainResult}

In this section, we state and prove our main result on asymptotically optimal changes of measure for small noise diffusions in random environments. Our main theorem is as follows.

\begin{theorem}
\label{T:UniformlyLogEfficient} Let $\{\left(X^{\epsilon}_{s}, Y^{\epsilon}_{s} \right),\epsilon>0\}$ be
the solution to (\ref{Eq:Main}) for $s\in[t_{0},T]$ with initial point $(x_{0},y_{0})$ at time $t_{0}$.  Consider a non-negative, bounded and continuous
function $h:\mathbb{R}^{m}\mapsto\mathbb{R}$ and let $\bar{U}(s,x)$ be a subsolution to the associated HJB equation. Assume Conditions
\ref{A:Assumption1}, \ref{A:Assumption2} and \ref{Cond:ExtraReg}. In the general case where $b\neq 0$, consider $\rho>0$ and define the (random) feedback control $u_{\rho}(s,x,y,\gamma)=\left(u_{1,\rho}(s,x,y,\gamma), u_{2,\rho}(s,x,y,\gamma)\right)$ by

 \begin{equation*}
u_{\rho}(s,x,y,\gamma)=\left(-\left(\sigma+D\chi_{\rho} \tau_{1}\right)^{T}(x,y,\gamma)\nabla_{x}\bar{U}(s,x), -\left(D\chi_{\rho}\tau_{2}\right)^{T}(y,\gamma)\nabla_{x}\bar{U}(s,x)\right)\label{Eq:feedback_controlReg1}
\end{equation*}

Then for $\rho=\rho(\epsilon)=\frac{\delta^{2}}{\epsilon}\downarrow 0$ we have that almost surely in $\gamma\in\Gamma$
\begin{equation}
\liminf_{\epsilon\rightarrow0}-\epsilon\ln Q^{\epsilon,\gamma}(t_{0},x_{0},y_{0};u_{\rho}(\cdot))\geq
G(t_{0},x_{0})+\bar{U}(t_{0},x_{0}). \label{Eq:GoalSubsolution}%
\end{equation}

If $b=0$, then set $u(s,x,y,\gamma)=\left(-\sigma^{T}(x,y,\gamma)\nabla_{x}\bar{U}(s,x), 0\right)$ and (\ref{Eq:GoalSubsolution}) holds with $u_{\rho}(\cdot)=u(\cdot)$.
\end{theorem}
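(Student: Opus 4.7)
The strategy is the subsolution approach of Dupuis--Wang, augmented with a corrector that neutralizes the $O(\epsilon/\delta)$-unbounded drift in the slow equation. Starting from the identity $Q^{\epsilon,\gamma} = \bar{\mathbb{E}}[\exp(-\Psi^\epsilon/\epsilon)]$ with
\[
\Psi^\epsilon = 2h(\bar{X}^\epsilon_T) + \int_{t_0}^T\|u_\rho\|^2\,ds + 2\sqrt{\epsilon}\int_{t_0}^T u_\rho^T\,d\bar{Z}_s,
\]
I would apply It\^o's formula to the test function $F(s,x,y,\gamma) = \bar{U}(s,x) + \delta\,\chi_\rho(y,\gamma)^T\nabla_x\bar{U}(s,x)$ along the controlled path $(\bar{X}^\epsilon,\bar{Y}^\epsilon)$ under $\bar{\mathbb{P}}$. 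The $\delta$-weight is dictated by $\rho=\delta^2/\epsilon$: the fast generator $(\epsilon/\delta^2)\mathcal{L}^\gamma$ acting on $\delta\chi_\rho^T\nabla_x\bar{U}$ produces $(\epsilon/\delta)(\rho\chi_\rho-b)^T\nabla_x\bar{U}$, whose $-(\epsilon/\delta)b^T\nabla_x\bar{U}$ piece exactly cancels the singular slow drift via the cell problem (\ref{Eq:RandomCellProblem}), leaving only a harmless $\delta\chi_\rho^T\nabla_x\bar{U}$ of $L^2(\pi)$-norm of order $\sqrt{\epsilon}$.

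A direct verification shows that, by the very definition of $u_\rho$, the martingale part of $F$ equals $-\sqrt{\epsilon}\int u_\rho^T d\bar{Z}$ up to $O(\sqrt{\epsilon}\delta)$ terms, and that the control-induced drift contributions from $\sigma u_{\rho,1}$, $D\chi_\rho\tau_1 u_{\rho,1}$ and $D\chi_\rho\tau_2 u_{\rho,2}$ telescope to exactly $-\|u_\rho\|^2$. Reorganizing yields the representation
\[
\Psi^\epsilon = 2\bar{U}(t_0,x_0) + 2\bigl[h(\bar{X}^\epsilon_T) - \bar{U}(T,\bar{X}^\epsilon_T)\bigr] + 2\int_{t_0}^T A(s,\bar{X}^\epsilon_s,\bar{Y}^\epsilon_s,\gamma)\,ds + \mathcal{R}^\epsilon(\gamma),
\]
where $A = \partial_s\bar{U} + c^T\nabla_x\bar{U} + \nabla_x\bar{U}^T D\chi_\rho\,g - \tfrac{1}{2}\|u_\rho\|^2$ and $\mathcal{R}^\epsilon$ collects the $\delta\chi_\rho$ boundary terms at $t_0,T$ together with the lower-order It\^o corrections. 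By Theorems~\ref{T:QuenchedLLN} and~\ref{T:MainTheorem3} the $\pi$-averages of $c + D\chi_\rho g$ and of the matrix defining $\tfrac{1}{2}\|u_\rho\|^2$ are $r(x)$ and $\tfrac{1}{2}q(x)$, so the ergodic mean of $A$ is $\partial_s\bar{U} + H(x,\nabla_x\bar{U})$, nonnegative by the subsolution inequality.

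A naive lower bound on $\Psi^\epsilon$ based only on $A\geq 0$ yields only $\liminf\geq 2\bar{U}(t_0,x_0)$; to sharpen this to $G+\bar{U}$ I would combine the representation above with the quenched LDP of \cite{Spiliopoulos2014} for $\bar{X}^\epsilon$ under $\bar{\mathbb{P}}$. Homogenization of the controlled dynamics identifies the effective drift as $r(x)-q(x)\nabla_x\bar{U}$, hence a rate function $S^{\bar{\mathbb{P}}}$. Varadhan's lemma then gives
\[
\liminf_{\epsilon\downarrow 0} -\epsilon\log Q^{\epsilon,\gamma} \geq 2\bar{U}(t_0,x_0) + \inf_\phi\Bigl\{2(h-\bar{U})(\phi_T) + 2\int_{t_0}^T(\partial_s\bar{U}+H)(s,\phi_s)\,ds + S^{\bar{\mathbb{P}}}(\phi)\Bigr\}.
\]
Completing the square exhibits the identity $S^{\bar{\mathbb{P}}}(\phi) = S(\phi) + \bar{U}(T,\phi_T) - \bar{U}(t_0,x_0) - \int(\partial_s\bar{U}+H)\,ds$; substituting and then discarding the two nonpositive contributions using $\bar{U}(T,\cdot)\leq h$ and $\partial_s\bar{U}+H\geq 0$ collapses the bracket to $\inf_\phi\{h(\phi_T) + S(\phi)\} - \bar{U}(t_0,x_0) = G(t_0,x_0) - \bar{U}(t_0,x_0)$, producing (\ref{Eq:GoalSubsolution}).

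The main technical obstacle is rendering every step quenched in $\gamma$. Since $\chi_\rho$ is only a weak $\mathcal{H}^1(\pi)$-solution, the pointwise boundary values $\chi_\rho(y_0,\gamma)$ and $\chi_\rho(\bar{Y}^\epsilon_T,\gamma)$ present in $\mathcal{R}^\epsilon$ require an almost-sure estimate that goes beyond the mean-square bound $\mathbb{E}^\pi|\delta\chi_\rho|^2\leq K\epsilon$; this is precisely the purpose of the auxiliary lemma of Appendix~\ref{App:AuxiliaryTechnicalResults}. Likewise, promoting the ergodic averaging of $A(s,\bar{X}^\epsilon_s,\bar{Y}^\epsilon_s,\gamma)$ and the large deviation principle for $\bar{X}^\epsilon$ under $\bar{\mathbb{P}}$ from annealed to quenched relies on the weak-convergence and occupation-measure machinery developed in \cite{Spiliopoulos2014}, now applied to the perturbed drift induced by $u_\rho$. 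The case $b=0$ reduces to the above with $\chi_\rho\equiv 0$ and the corrector argument trivialized.
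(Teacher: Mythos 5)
Your route is genuinely different from the paper's. The paper does not touch It\^o's formula, perturbed test functions, or Varadhan's lemma; instead it invokes the Bou\'e--Dupuis--type variational representation (Lemma 4.3 of \cite{DupuisSpiliopoulosWang}) to write $-\epsilon\log Q^{\epsilon,\gamma}$ as an infimum over an auxiliary control $v$, restricts the admissible $v$ via Lemma~\ref{L:RestrictionControls}, passes to the quenched averaged/LDP limit of the controlled system using the machinery of \cite{Spiliopoulos2014}, and then does exactly the algebraic manipulation you carry out (completing the square and invoking the two subsolution inequalities). Your algebra is correct: the identity $S^{\bar{\mathbb{P}}}(\phi)=S(\phi)+\bar U(T,\phi_T)-\bar U(t_0,x_0)-\int(\partial_s\bar U+H)\,ds$ and the final collapse to $G(t_0,x_0)+\bar U(t_0,x_0)$ check out, and the perturbed test function $\bar U+\delta\chi_\rho^T\nabla_x\bar U$ does cancel the $\epsilon/\delta$-singular drift through the cell problem.

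The gap is precisely the one you flag and then wave away: your decomposition reintroduces $\chi_\rho$ \emph{itself}, not merely $D\chi_\rho$, via the boundary terms $\delta\chi_\rho(y_0,\gamma)$ and $\delta\chi_\rho(\bar Y^\epsilon_T,\gamma)$ inside $\mathcal{R}^\epsilon$. The paper explicitly stresses (end of Section~\ref{S:Notation} and Section~\ref{S:Examples}) that the scheme is built so that only $D\chi_\rho$ appears, because in the stationary--ergodic setting one has good $L^2(\pi)$ control of $D\chi_\rho$ but the corrector $\chi_\rho$ need not even stay bounded quenched; sub-linearity of the corrector along a trajectory is a delicate matter. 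You assert that Lemma~\ref{L:RestrictionControls} is ``precisely'' what fills this hole, but it is not: that lemma bounds $\mathbb{E}\int_0^{\delta^2T/\epsilon}\|v(s)\|^2\,ds$ for the minimizing control in the variational representation, a statement that makes no reference to $\chi_\rho$ and is meaningless outside the representation formula. So your proof is missing a quenched, pathwise estimate showing $\delta\chi_\rho(\bar Y^\epsilon_T,\gamma)\to 0$, and this would have to be supplied from scratch. A secondary (smaller) issue is that ``apply Varadhan's lemma to $\bar X^\epsilon$ under $\bar{\mathbb{P}}$'' is not a single cited result: the exponent contains the running functional $\int A(s,\bar X_s,\bar Y_s,\gamma)\,ds$ depending on the fast variable, so you need a joint LDP/averaging statement for the \emph{controlled} slow process and its fast occupation measure, which is exactly what the paper's variational route is engineered to deliver without Varadhan.
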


Based on Theorem  \ref{T:UniformlyLogEfficient}, we can then establish Proposition \ref{P:UniformlyLogEfficient1}, which is about estimating probabilities of the form $\mathbb{P}_{t_{0},x_{0},y_{0}}[X^{\epsilon,\gamma}_{T}\in A]$. Even though this case cannot be recast as the expectation $\mathbb{E}\left[e^{-\frac{1}{\epsilon}h}\right]$ for some continuous function $h$, an approximating argument analogous to \cite{DupuisWang2} can be used to establish the claim. The details of the proof are omitted.
\begin{proposition}
\label{P:UniformlyLogEfficient1} Assume Conditions
\ref{A:Assumption1}, \ref{A:Assumption2} and \ref{Cond:ExtraReg}. Let $\{\left(X^{\epsilon}, Y^{\epsilon} \right),\epsilon>0\}$ be
the solution to (\ref{Eq:Main}) with initial point $(t_{0},x_{0},y_{0})$. Let $A\subset\mathbb{R}^{m}$ be a regular set with respect to the action functional $S$ and the initial point $(t_{0},x_{0},y_{0})$, i.e., the infimum of $S$ over the closure $\bar{A}$ is the same as
the infimum over the interior $A^{o}$.
Let
\[
h(x)=%
\begin{cases}
0 & \text{if }x\in A\\
+\infty & \text{if }x\notin A.
\end{cases}
\]
Let $u_{\rho}(s,x,y,\gamma)$  be defined as in Theorem \ref{T:UniformlyLogEfficient}. Then, for  $\rho=\rho(\epsilon)=\frac{\delta^{2}}{\epsilon}\downarrow 0$ we have that almost surely in $\gamma\in\Gamma$, (\ref{Eq:GoalSubsolution}) holds.
\end{proposition}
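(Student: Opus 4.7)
The plan is to reduce Proposition \ref{P:UniformlyLogEfficient1} to Theorem \ref{T:UniformlyLogEfficient} by approximating the singular terminal cost $h=0\cdot 1_A+(+\infty)\cdot 1_{A^c}$ from below by a sequence $\{h_\eta\}_{\eta>0}$ of non-negative, bounded, continuous functions, following the strategy of \cite{DupuisWang2}. I would choose $h_\eta\equiv 0$ on $A$, $0\leq h_\eta\leq h$ everywhere, and $h_\eta(x)\uparrow +\infty$ as $\eta\downarrow 0$ for each fixed $x\notin\bar{A}$, growing fast enough at infinity that $\bar{U}(T,x)\leq h_\eta(x)$ for all $x$. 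This is possible because $\nabla_x\bar{U}$ is globally bounded (Condition \ref{Cond:ExtraReg}), so $\bar{U}(T,\cdot)$ grows at most linearly, while the penalty $h_\eta$ off $\bar{A}$ can be made to grow arbitrarily fast. With this choice, $\bar{U}$ remains a classical subsolution of the HJB equation with terminal data $h_\eta$, and the feedback control $u_\rho$ of Theorem \ref{T:UniformlyLogEfficient} is unchanged across $\eta$.

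The pointwise inequality $1_A(x)\leq e^{-2h_\eta(x)/\epsilon}$ is immediate from $h_\eta\equiv 0$ on $A$ together with $h_\eta\geq 0$ elsewhere, which gives
\[
Q^{\epsilon,\gamma}(t_0,x_0,y_0;u_\rho)=\bar{\mathbb{E}}\left[1_{\bar{X}^{\epsilon,\gamma}_T\in A}\left(\frac{d\mathbb{P}}{d\bar{\mathbb{P}}}\right)^{2}\right]\leq \bar{\mathbb{E}}\left[e^{-2h_\eta(\bar{X}^{\epsilon,\gamma}_T)/\epsilon}\left(\frac{d\mathbb{P}}{d\bar{\mathbb{P}}}\right)^{2}\right].
\]
Applying Theorem \ref{T:UniformlyLogEfficient} with $h$ replaced by $h_\eta$ to the right-hand side yields, for $\rho=\rho(\epsilon)=\delta^{2}/\epsilon$ and $\nu$-almost every $\gamma\in\Gamma$,
\[
\liminf_{\epsilon\downarrow 0}-\epsilon\log Q^{\epsilon,\gamma}(t_0,x_0,y_0;u_\rho)\geq G_{h_\eta}(t_0,x_0)+\bar{U}(t_0,x_0),
\]
where $G_{h_\eta}(t_0,x_0)=\inf\{S_{t_0 T}(\phi)+h_\eta(\phi_T):\phi\in\mathcal{AC},\,\phi_{t_0}=x_0\}$.

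The final step is to send $\eta\downarrow 0$ and show $\lim_{\eta\downarrow 0}G_{h_\eta}(t_0,x_0)=G(t_0,x_0):=\inf\{S_{t_0 T}(\phi):\phi_T\in A,\,\phi_{t_0}=x_0\}$. The direction $G_{h_\eta}\leq G$ is immediate from $h_\eta\leq h$. For the reverse, I would pick near-minimizers $\phi^{(\eta)}$ with $S_{t_0 T}(\phi^{(\eta)})+h_\eta(\phi^{(\eta)}_T)\leq G_{h_\eta}(t_0,x_0)+\eta$; compactness of sublevel sets of $S$ together with Arzel\`a--Ascoli yields a uniformly convergent subsequence $\phi^{(\eta_k)}\to\phi^\ast$. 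Since the left-hand side is bounded in $k$ while $h_{\eta_k}\to+\infty$ off $\bar{A}$, the limit satisfies $\phi^\ast_T\in\bar{A}$; lower semicontinuity of $S$ then gives $S_{t_0 T}(\phi^\ast)\leq\liminf_{k}G_{h_{\eta_k}}(t_0,x_0)$, and the regularity hypothesis on $A$ (equality of the infima of $S$ over $\bar{A}$ and $A^o$) identifies this limit with $G(t_0,x_0)$.

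The main obstacle is calibrating $h_\eta$ so that simultaneously (i) $\bar{U}$ remains an admissible subsolution, which requires $h_\eta\geq \bar{U}(T,\cdot)$ globally, and (ii) the penalties $h_\eta$ diverge off $\bar{A}$ strongly enough to force near-minimizers into $\bar{A}$ in the limit. Regularity of $A$ enters essentially and only at the final step, where it reconciles the $\bar{A}$-localization produced by the penalty method with the hitting problem for $A$ itself.
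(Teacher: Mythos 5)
Your overall strategy is exactly the one the paper invokes (and leaves implicit): dominate the indicator $1_{A}$ by $e^{-2h_\eta/\epsilon}$ for continuous minorants $h_\eta\leq h$ vanishing on $A$, apply Theorem~\ref{T:UniformlyLogEfficient} to each $h_\eta$ with the \emph{same} feedback control $u_\rho$, and then pass to the limit $\eta\downarrow0$ using compactness of sublevel sets of $S$, lower semicontinuity, and the regularity hypothesis on $A$ to identify $\lim_\eta G_{h_\eta}=G$. That last $\Gamma$-limit step is set up correctly, and the role of the regularity assumption (reconciling $\inf_{\bar A}S$ with $\inf_{A}S$) is correctly isolated.

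There is, however, an internal inconsistency in how you calibrate $h_\eta$. You simultaneously ask that each $h_\eta$ be bounded (this is needed to invoke Theorem~\ref{T:UniformlyLogEfficient} as stated, and is used essentially in Lemma~\ref{L:RestrictionControls} through $\|h\|_\infty<\infty$) \emph{and} that $h_\eta$ grow faster than $\bar{U}(T,\cdot)$ at infinity so that the terminal subsolution inequality $\bar{U}(T,x)\leq h_\eta(x)$ holds for all $x$. These two demands cannot both be met if $\bar{U}(T,\cdot)$ is unbounded above, and Condition~\ref{Cond:ExtraReg} bounds only $\nabla_x\bar U$ and $\nabla_x^2\bar U$, not $\bar U$ itself; so a priori $\bar{U}(T,\cdot)$ may grow linearly. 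The terminal inequality cannot be dropped: it is used pointwise over all $\phi_T$ in the final display $2h(\phi_T)-\bar U(T,\phi_T)\geq h(\phi_T)$, so it must hold globally, not only on $A$. The standard fix is to normalize so that $\bar{U}(T,\cdot)\leq0$ on all of $\mathbb{R}^m$ (the natural terminal constraint once $h$ is approximated by nonnegative $h_\eta$), e.g. by noting that for the hitting-probability problem one should work with subsolutions satisfying this stronger condition from the outset, or by replacing $\bar U$ with $\bar U-\sup_x\bar U(T,x)^+$ (which leaves the PDE inequality and, crucially, $\nabla_x\bar U$ and hence $u_\rho$ unchanged), at the cost of shifting the lower bound by that constant; one then has to argue that constant is zero in the relevant setting. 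You should either add that normalization to your choice of subsolution or acknowledge that the domination requirement restricts the admissible $\bar U$. Apart from this calibration issue, the argument is sound and is precisely the approximation scheme the paper refers to.
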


The following remark demonstrates how subsolutions quantify performance.
\begin{remark}\label{R:SubsolutionPerformance}
The subsolution property of $\bar{U}$ implies  that $0\leq \bar{U}(s,x)\leq
G(s,x)$ everywhere. By (\ref{Eq:GoalSubsolution}) this implies that  the scheme is logarithmic asymptotically optimal if $\bar
{U}(t_{0},x_{0})=G(t_{0},x_{0})$ at the starting point $(t_{0},x_{0})$. Standard Monte Carlo corresponds to choosing the subsolution $\bar{U}=0$. The latter imply that any subsolution scheme with
$$0\ll\bar{U}(t_{0},x_{0})\leq G(t_{0},x_{0}) $$
will outperform standard Monte Carlo measured by how close to $G$ the value of $\bar{U}$  at the initial point $(t_{0},x_{0})$ is.
\end{remark}

\begin{remark}\label{R:MetastabilityEffect}
Even though we will not elaborate more on this issue in this paper, we mention for completeness that one can merge the results of \cite{DupuisSpiliopoulosZhou2013, DupuisSpiliopoulos2014, Spiliopoulos2014a} with the ones of this paper. In particular, for multiscale problems with metastable features, one can use the change of measure given in Theorem \ref{T:UniformlyLogEfficient} and use as subsolutions $\bar{U}(s,x)$, the subsolutions constructed in  \cite{DupuisSpiliopoulosZhou2013, DupuisSpiliopoulos2014} that guarantee good performance even non-asymptotically. This is an important point that we plan to study in detail in a future work. The focus of this paper is the effect of the multiscale aspect.
\end{remark}

\begin{remark}\label{R:NoUnboundedDriftTerm}
It is clear that if $b=0$ then $\chi_{\rho}=0$, in which case the control leading to an asymptotically optimal change of measure simplifies to
\begin{equation*}
u(s,x,y,\gamma)=\left(-\sigma^{T}(x,y,\gamma)\nabla_{x}\bar{U}(s,x), 0\right),\label{Eq:feedback_controlReg2a}
\end{equation*}
as the statement of Theorem \ref{T:UniformlyLogEfficient} says.
\end{remark}
\begin{proof}[Proof of Theorem \ref{T:UniformlyLogEfficient}]

Let $\rho=\rho(\epsilon)>0$ such that $\rho(\epsilon)=\frac{\delta^{2}}{\epsilon}\downarrow0$ as $\epsilon\downarrow 0$. Recall the definition
 \begin{equation*}
u_{1,\rho}(s,x,y,\gamma)=-\left(\sigma+D\chi_{\rho} \tau_{1}\right)^{T}(x,y,\gamma)\nabla_{x}\bar{U}(s,x), \qquad u_{2,\rho}(s,x,y,\gamma)=-\left(D\chi_{\rho}\tau_{2}\right)^{T}(y,\gamma)\nabla_{x}\bar{U}(s,x)\label{Eq:feedback_controlReg2}
\end{equation*}
and set
\begin{eqnarray}
\bar{c}_{\rho}\left(  s,x,y,\gamma\right)&=&c(x,y,\gamma)-\sigma(x,y,\gamma)u_{1,\rho}(s,x,y,\gamma)\nonumber\\
\bar{g}_{\rho}\left(  s,x,y,\gamma\right)&=&g(x,y,\gamma)-\tau_{1}(y,\gamma)u_{1,\rho}(s,x,y,\gamma)-\tau_{2}(y,\gamma)u_{2,\rho}(s,x,y,\gamma).\nonumber
\end{eqnarray}

Let $(\hat{X}^{\epsilon},\hat{Y}^{\epsilon})=(\hat{X}^{\epsilon,\gamma},\hat{Y}^{\epsilon,\gamma})$ satisfying the initial condition
 $\hat{X}^{\epsilon}_{t_{0}}=x_{0}$, $\hat{Y}^{\epsilon}_{t_{0}}=y_{0}$ and for
$s\in(t_{0},T]$ being the unique strong solution of
\begin{eqnarray}
d\hat{X}^{\epsilon}_{s}&=&\left[  \frac{\epsilon}{\delta}b\left(\hat{Y}^{\epsilon}_{s},\gamma\right)+\bar{c}_{\rho}\left(  \hat{X}^{\epsilon}_{s}%
,\hat{Y}^{\epsilon}_{s},\gamma\right)+\sigma\left(  \hat{X}_{s}^{\epsilon},\hat{Y}_{s}^{\epsilon},\gamma\right) v_{1}(s)\right]   dt+\sqrt{\epsilon}%
\sigma\left(  \hat{X}^{\epsilon}_{s},\hat{Y}^{\epsilon}_{s},\gamma\right)
dW_{s}, \nonumber\\
d\hat{Y}^{\epsilon}_{s}&=&\frac{1}{\delta}\left[  \frac{\epsilon}{\delta}f\left(\hat{Y}^{\epsilon}_{s},\gamma\right)  +\bar{g}_{\rho}\left(  \hat{X}^{\epsilon}_{s}
,\hat{Y}^{\epsilon}_{s},\gamma\right)+\tau_{1}\left(\hat{Y}^{\epsilon}_{s},\gamma\right)v_{1}(s)+
\tau_{2}\left(\hat{Y}^{\epsilon}_{s},\gamma\right)v_{2}(s)\right]   dt\nonumber\\
& &\hspace{5cm}+\frac{\sqrt{\epsilon}}{\delta}\left[
\tau_{1}\left(\hat{Y}^{\epsilon}_{s},\gamma\right)
dW_{s}+\tau_{2}\left(\hat{Y}^{\epsilon}_{s},\gamma\right)dB_{s}\right],\label{Eq:Main3}\\
\hat{X}^{\epsilon}_{t_{0}}&=&x_{0},\hspace{0.2cm}\hat{Y}^{\epsilon}_{t_{0}}=y_{0}\nonumber
\end{eqnarray}
where $v(\cdot)=(v_{1}(\cdot),v_{2}(\cdot))\in\mathcal{A}$, the set of all $\mathfrak{F}-$progressively measurable $m$-dimensional processes satisfying $\mathbb{E}\int_{0}^{T}\left\Vert v(s)\right\Vert^{2}ds<\infty$.
It is clear that $(\hat{X}^{\epsilon},\hat{Y}^{\epsilon})=(\hat{X}^{\epsilon,\gamma},\hat{Y}^{\epsilon,\gamma})$ depends on the realization $\gamma$ of the random environment, but this is not explicitly denoted in the notation.

After these definitions and based on Condition \ref{A:Assumption1}, Lemma 4.3 of \cite{DupuisSpiliopoulosWang} guarantees the following representation of the second moment
\begin{align}
-\epsilon\log Q^{\epsilon,\gamma}(t_{0},x_{0},y_{0};u)  =\inf_{v\in\mathcal{A}}\mathbb{E}\left[  \frac{1}{2}\int_{t_{0}}^{T}\left\Vert
v(s)\right\Vert ^{2}ds-\int_{t_{0}}^{T}\Vert u_{\rho}(s,\hat{X}^{\epsilon}_{s},\hat{Y}^{\epsilon}
_{s},\gamma)\Vert^{2}ds+2h(\hat{X}^{\epsilon}_{T})\right] \label{Eq:ToBeBounded}
\end{align}

Next, we need to take the limit as $\epsilon\downarrow0$ of (\ref{Eq:ToBeBounded}). At this point we use Lemma \ref{L:RestrictionControls} stating that for $\delta,\epsilon$ small enough such that $\delta/\epsilon\ll 1$ the infimum in the representation (\ref{Eq:ToBeBounded}) can be taken over all controls $v\in\mathcal{A}$ satisfying the constraint
\begin{equation}
\mathbb{E}\int_{0}^{\frac{\delta^{2}T}{\epsilon}}\left\Vert v(s)\right\Vert^{2}ds\leq C\delta\label{Eq:RestrictionControls}
\end{equation}
where the constant $C$ depends on $T$, but not on $\delta,\epsilon$ or $\gamma$. This bound allows us to apply the large deviations results and ergodic theorems for controlled diffusions of the form (\ref{Eq:Main3}) in random environments of \cite{Spiliopoulos2014}. In turn this allows us to identify the limit infimum of (\ref{Eq:ToBeBounded}). In particular, let us define
\[
r_{\rho}(x)=\mathbb{E}^{\pi}\left[  \tilde{c}(x,\cdot) +D\tilde{\chi}_{\rho}(\cdot) \tilde{g}(x,\cdot)\right],
\]

\begin{align*}
q_{\rho}(x)&=\mathbb{E}^{\pi}\left[  (\tilde{\sigma}(x,\cdot)+D\tilde{\chi}_{\rho}%
(\cdot)\tilde{\tau}_{1}(\cdot))(\tilde{\sigma}(x,\cdot)+D\tilde{\chi}_{\rho}%
(\cdot)\tilde{\tau}_{1}(\cdot))^{T}+\left(D\tilde{\chi}_{\rho}%
(\cdot)\tilde{\tau}_{2}(\cdot)\right)\left(D\tilde{\chi}_{\rho}%
(\cdot)\tilde{\tau}_{2}(\cdot)\right)^{T}\right],
\end{align*}

\begin{align*}
\bar{r}_{\rho}(s,x)&=r_{\rho}(x)-\mathbb{E}^{\pi}\left[\left(\tilde{\sigma} \tilde{u}_{1,\rho}+D \tilde{\chi}_{\rho}\tilde{\tau}_{1}\tilde{u}_{1,\rho}+D \tilde{\chi}_{\rho}\tilde{\tau}_{2}\tilde{u}_{2,\rho} \right)(s,x,\cdot)\right]\nonumber\\
&=r_{\rho}(x)+q_{\rho}(x)\nabla_{x}\bar{U}(s,x)
\end{align*}
and
\[
\bar{L}_{\rho}(s,x,\beta)=\frac{1}{2}\left(\beta-\bar{r}_{\rho}(s,x)\right)q^{-1}_{\rho}(x)\left(\beta-\bar{r}_{\rho}(s,x)\right)^{T}
\]
we obtain that almost surely in $\gamma\in\Gamma$
\begin{align}
\liminf_{\epsilon\rightarrow0}-\epsilon\log Q^{\epsilon,\gamma}(t_{0},x_{0},y_{0};u)& \geq \inf_{\phi,\phi_{t_{0}}=x_{0}}\lim_{\rho\downarrow 0}\left[ \int_{t_{0}}^{T}\bar{L}_{\rho}(s,\phi_{s},\dot{\phi}_{s})ds\right.\nonumber\\
& \left.-\mathbb{E}^{\pi}\int_{t_{0}}^{T}\left[\left\Vert u_{1,\rho}(s,\phi_{s},\cdot,\gamma)\right\Vert ^{2}+\left\Vert u_{2,\rho}(s,\phi_{s},\cdot,\gamma)\right\Vert ^{2}\right]ds+2h(\phi_{T})\right].\label{Eq:ToBeBoundedAfterLimit}
\end{align}

In the last display, the first limiting term, i.e., $\int_{t_{0}}^{T}\bar{L}_{\rho}(s,\phi_{s},\dot{\phi}_{s})ds$, is obtained via the large deviations result of Theorem \ref{T:MainTheorem3}. As it is seen in the large deviations computations of \cite{Spiliopoulos2014}, the choice $\rho(\epsilon)=\frac{\delta^{2}}{\epsilon}$ allows to prove the large deviations upper and lower bounds, i.e., Theorem \ref{T:MainTheorem3}. The limit of the second term is due to the quenched ergodic theorem for controlled random diffusion processes given by (\ref{Eq:Main3}) with the control $v$ being such that (\ref{Eq:RestrictionControls}) holds; this is Lemma A.6 in \cite{Spiliopoulos2014}.

The next step is to appropriately combine the terms on the right hand side of (\ref{Eq:ToBeBoundedAfterLimit}) by recalling the definition of $u_{1,\rho}$ and $u_{2,\rho}$.  In particular using their definition we immediately obtain
\begin{align}
\mathbb{E}^{\pi}\int_{t_{0}}^{T}\left[\left\Vert u_{1,\rho}(s,\phi_{s},\cdot,\gamma)\right\Vert ^{2}+\left\Vert u_{2,\rho}(s,\phi_{s},\cdot,\gamma)\right\Vert ^{2}\right]ds=\int_{t_{0}}^{T}\langle\nabla_{x}\bar{U}(s,\phi_{s}),q_{\rho}(\phi_{s})\nabla
_{x}\bar{U}(s,\phi(s))\rangle ds\label{Eq:LimitingTerm1}
\end{align}
where we have used the definition of $q_{\rho}(x)$. Moreover, setting
\[
L_{\rho}(x,\beta)=\frac{1}{2}\left(\beta-r_{\rho}(x)\right)q^{-1}_{\rho}(x)\left(\beta-r_{\rho}(x)\right)^{T}
\]
we also have that
\begin{align}
\int_{t_{0}}^{T}\bar{L}_{\rho}(s,\phi_{s},\dot{\phi}_{s})ds&=\frac{1}{2}\int_{t_{0}}^{T}\left(\dot{\phi}_{s}-\bar{r}_{\rho}(s,\phi_{s})\right)q^{-1}_{\rho}(\phi_{s})\left(\dot{\phi}_{s}-\bar{r}_{\rho}(s,\phi_{s})\right)^{T}ds\nonumber\\
&=\frac{1}{2}\int_{t_{0}}^{T}\left[\left< \dot{\phi}_{s}-r_{\rho}(\phi_{s}),q^{-1}_{\rho}(\phi_{s})\left(\dot{\phi}_{s}-r_{\rho}(\phi_{s})\right)\right>
-2\langle\dot{\phi}_{s}-r_{\rho}(\phi
_{s}),\nabla_{x}\bar{U}(s,\phi_{s})\rangle\right.\nonumber\\
&\qquad\left.+\langle \nabla_{x}\bar{U}(s,\phi_{s}),q_{\rho}(\phi_{s})\nabla_{x}\bar{U}(s,\phi_{s})\rangle\right] ds\nonumber\\
&=\frac{1}{2}\int_{t_{0}}^{T}\left[L_{\rho}(\phi_{s},\dot{\phi}_{s})-2\langle\dot{\phi}_{s}-r_{\rho}(\phi
_{s}),\nabla_{x}\bar{U}(s,\phi_{s})\rangle\right.\nonumber\\
&\qquad\left.+\langle \nabla_{x}\bar{U}(s,\phi_{s}),q_{\rho}(\phi_{s})\nabla_{x}\bar{U}(s,\phi_{s})\rangle\right] ds\label{Eq:LimitingTerm2}
\end{align}

Combining (\ref{Eq:LimitingTerm1}) and (\ref{Eq:LimitingTerm2}), we subsequently have that
\begin{align}
&\liminf_{\epsilon\rightarrow0}-\epsilon\log Q^{\epsilon,\gamma}(t_{0},x_{0},y_{0};u) \geq \inf_{\phi,\phi_{t_{0}}=x_{0}}\lim_{\rho\downarrow 0}\left[ \int_{t_{0}}^{T}L_{\rho}(\phi_{s},\dot{\phi}_{s})ds +2h(\phi_{T})\right.\nonumber\\
&\quad \left.- \int_{t_{0}}^{T}\left(\langle\dot{\phi}_{s}-r_{\rho}(\phi
_{s}),\nabla_{x}\bar{U}(s,\phi_{s})\rangle+\frac{1}{2}\langle\nabla_{x}\bar{U}(s,\phi_{s}),q_{\rho}(\phi_{s})\nabla
_{x}\bar{U}(s,\phi_{s})\rangle\right) ds\right]\nonumber\\
&\geq \inf_{\phi,\phi_{t_{0}}=x_{0}}\left[ \int_{t_{0}}^{T}L(\phi_{s},\dot{\phi}_{s})ds +2h(\phi_{T})\right.\nonumber\\
&\quad \left.- \int_{t_{0}}^{T}\left(\langle\dot{\phi}_{s}-r(\phi
_{s}),\nabla_{x}\bar{U}(s,\phi_{s})\rangle+\frac{1}{2}\langle\nabla_{x}\bar{U}(s,\phi_{s}),q(\phi_{s})\nabla
_{x}\bar{U}(s,\phi_{s})\rangle\right) ds\right]
\label{Eq:ToBeBoundedAfterLimit2}
\end{align}

Next, we use the fact that $\bar{U}(s,x)$ is subsolution to the related homogenized HJB equation.  In particular, Definition \ref{Def:ClassicalSubsolution}, gives that
\begin{align}
\frac{d}{ds}\bar{U}(s,\phi_{s})&=\partial_{t}\bar{U}(s,\phi_{s})+\langle\nabla_{x}\bar{U}(s,\phi_{s}),\dot{\phi}_{s}\rangle\nonumber\\
&\geq \langle\dot{\phi}_{s}-r(\phi_{s}),\nabla_{x}\bar{U}(s,\phi_{s})\rangle
+\frac{1}{2}\langle\nabla_{x}\bar{U}(s,\phi_{s}),q(\phi_{s})\nabla_{x}\bar
{U}(s,\phi_{s})\rangle \label{Eq:LimitingTerm3}
\end{align}

Using inequality (\ref{Eq:LimitingTerm3}) and the third part of Definition \ref{Def:ClassicalSubsolution}, (\ref{Eq:ToBeBoundedAfterLimit2}) gives
\begin{align*}
&\liminf_{\epsilon\rightarrow0}-\epsilon\log Q^{\epsilon,\gamma}(t_{0},x_{0},y_{0};u) \geq \inf_{\phi,\phi_{t_{0}}=x_{0}}\left[ \int_{t_{0}}^{T}L(\phi_{s},\dot{\phi}_{s})ds +2h(\phi_{T})-h(\phi_{T})+\bar{U}(t_{0},x_{0})\right]\nonumber\\
&=G(t_{0},x_{0})+\bar{U}(t_{0},x_{0}),
\label{Eq:ToBeBoundedAfterLimit3}
\end{align*}
where the definition of $G(t_{0},x_{0})=\inf_{\phi,\phi_{t_{0}}=x_{0}}\left[ \int_{t_{0}}^{T}L(\phi_{s},\dot{\phi}_{s})ds +h(\phi_{T})\right]$ was used. This concludes the proof of the theorem.
\end{proof}

\section{An Illustrating Example}\label{S:Examples}

In this section we present a numerical example to demonstrate the main result of the paper. We consider the case of Example \ref{Ex:RandomCase} when both $X,Y$ are one-dimensional and with the additional choice $b=f$. To be more precise, we consider the system of slow-fast motion
\begin{eqnarray}
dX^{\epsilon}_{t}&=&\left[ - \frac{\epsilon}{\delta} \partial_{y}Q\left(Y^{\epsilon}_{t},\gamma\right)+c\left(  X^{\epsilon}_{t}%
,Y^{\epsilon}_{t},\gamma\right)\right]   dt+\sqrt{\epsilon \lambda}%
dW_{t},\nonumber\\
dY^{\epsilon}_{t}&=&\left[ - \frac{\epsilon}{\delta^{2}}\partial_{y}Q\left(Y^{\epsilon}_{t},\gamma\right)  +\frac{1}{\delta}g\left(  X^{\epsilon}_{t}%
,Y^{\epsilon}_{t},\gamma\right)\right] dt+\frac{\sqrt{\epsilon}}{\delta}\sqrt{2D}\left[
\theta
dW_{t}+\sqrt{1-\theta^{2}}dB_{t}\right], \label{Eq:MainExample2}\\
X^{\epsilon}_{0}&=&x_{0},\hspace{0.2cm}Y^{\epsilon}_{0}=y_{0}\nonumber
\end{eqnarray}
where $\theta\in[-1,1]$ is the correlation between the noises of the $X$ and $Y$ component and $D,\lambda$ are strictly positive constants. By Proposition \ref{P:NewMeasureRandomCase} we have that the corresponding unique ergodic invariant measure takes the form
\[
\pi(d\gamma)=\frac{e^{-\tilde{Q}(\gamma)/D}}{\mathbb{E}^{\nu}\left[e^{-\tilde{Q}(\gamma)/D}\right]}\nu(d\gamma).
\]

Let $Z=\mathbb{E}^{\nu}\left[e^{-\tilde{Q}(\gamma)/D}\right]$,  $K=\mathbb{E}^{\nu}\left[e^{\tilde{Q}(\gamma)/D}\right]$ and assume that $Z,K<\infty$.  It is easy to see that in the one-dimensional case we can solve (\ref{Eq:RandomCellProblem}) explicitly even with $\rho=0$. We obtain that the random field
\[
\chi(y,\gamma)=\frac{1}{K}\int_{0}^{y}e^{Q(z,\gamma)/D}dz-y
\]
solves (\ref{Eq:RandomCellProblem}) with $\rho=0$. One can also compute that $\mathbb{E}^{\nu}\left[\chi(y,\gamma)\right]=\mathbb{E}^{\nu}\left[\partial_{y}\chi(y,\gamma)\right]=0$. Moreover, $\chi(y,\gamma)$ is unique up to an additive constant and we easily compute that
\[
\partial_{y}\chi(y,\gamma)=\frac{1}{K}e^{Q(y,\gamma)/D}dz-1.
\]
Hence, if we write
$\psi(y,\gamma)=\partial_{y}\chi(y,\gamma)$, then $\psi(y,\gamma)=\tilde{\psi}(\tau_{y}\gamma)$ where $\tilde{\psi}(\gamma)=\frac{1}{K}e^{\tilde{Q}(\gamma)/D}-1$. Namely, we are in agreement with the setup in Section \ref{S:Notation}.

The large deviations rate function from Theorem \ref{T:MainTheorem3} takes the form
\begin{equation*}
S_{0T}(\phi)=%
\begin{cases}
\frac{1}{2}\int_{0}^{T}|\dot{\phi}_{s}-r(\phi_{s})|^{2}q^{-1}ds & \text{if }\phi\in\mathcal{AC}%
([0,T];\mathbb{R}) \text{  and } \phi_{0}=x_{0}\\
+\infty & \text{otherwise.}%
\end{cases}
\label{Eq:ActionFunctional1a}%
\end{equation*}
where the effective drift is
\begin{align*}
r(x)=\mathbb{E}^{\pi}\left[\tilde{c}(x,\cdot)+D\tilde{\chi}(\cdot)\tilde{g}(x,\cdot)\right]=\mathbb{E}^{\pi}\left[\tilde{c}(x,\cdot)-\tilde{g}(x,\cdot)\right] +\frac{1}{K}\mathbb{E}^{\pi}\left[\tilde{g}(x,\cdot)e^{\tilde{Q}(\cdot)/D}\right]
\end{align*}
and the effective diffusivity is computed to be
\begin{align*}
q&=\lambda+\left(2D-2\theta\sqrt{\lambda}\sqrt{2D}\right)\left(1- \frac{1}{KZ}\right).
\end{align*}

Notice that in the formulas above, what appears is $\partial_{y}\chi(y,\gamma)$ and not $\chi(y,\gamma)$.   Moreover,  H\"{o}lder's inequality guarantees that $KZ>1$, which then implies that the effective diffusivity $q>0$. Then, given an appropriate subsolution $\bar{U}(s,x)$ the asymptotically optimal change of measure is based on the control of Theorem \ref{T:UniformlyLogEfficient}, which takes the form
\begin{equation}
u(s,x,y,\gamma)=\left(u_{1}(s,x,y,\gamma),u_{2}(s,x,y,\gamma)\right)\label{Eq:ExampleControl1}
\end{equation}
with
\begin{align}
u_{1}(s,x,y,\gamma)&=-\left(\sqrt{\lambda}+\theta\sqrt{2D}\left(\frac{1}{K}e^{Q(y,\gamma)/D}-1\right)\right)\partial_{x}\bar{U}(s,x)\nonumber\\
u_{2}(s,x,y,\gamma)&=-\sqrt{2D}\sqrt{1-\theta^{2}}\left(\frac{1}{K}e^{Q(y,\gamma)/D}-1\right)\partial_{x}\bar{U}(s,x)\label{Eq:ExampleControl2}
\end{align}

Notice that the control is random itself, since it depends on the random field $Q(y,\gamma)$. For the simulations below we consider the case where $Q(y,\gamma)$ is a Gaussian random field. Even though this case is not immediately covered by our results, since Condition \ref{A:Assumption1} does not hold (lack of boundedness), the simulation results imply that the improvement in performance is still significant.

For the simulations below we  consider Gaussian random fields with mean zero and differentiable covariance function
\[
C(y)=e^{-|y|^{2}}.
\]

Moreover, for the numerical simulations, we consider the case $c(x,y,\gamma)=-V^{\prime}(x)$ where $V(x)$ is the quadratic potential $V(x)=\frac{1}{2}x^{2}$ and $g(x,y,\gamma)=0$.   Then, the effective drift simplifies to $r(x)=-x$. Let us assume that we want to compute
\begin{equation}
\theta(\epsilon,\gamma)=\mathbb{E}\left[e^{-\frac{1}{\epsilon}h(X^{\epsilon,\gamma}_{T})}\right]\label{Eq:ToBeEstimated}
\end{equation}
where
\[
h(x)=%
\begin{cases}
(x-1)^{2} & x\geq0\\
(x+1)^{2} & x<0.
\end{cases}
\]

The homogenized HJB equation (\ref{Eq:HJBequation2}) becomes
\begin{equation*}
G_{t}(t,x)- xG_{x}(t,x)-\frac{1}{2}q|G_{x}(t,x)|^{2}=0,\quad G(T,x)=h(x).
\label{Eq:1DHJBeqn}%
\end{equation*}
One can solve this variational problem explicitly and obtain
\[
G(t,x)=\frac{(e^{T}-|x|e^{ t})^{2}}{(1+q)e^{2T}-q e^{2 t}}.
\]

Notice that $G$ is not smooth at $x=0$, and therefore it is not a classical sense solution. To obtain a smooth subsolution, one should mollify it. However, as it follows from
 \cite{DupuisWang2, DupuisSpiliopoulosWang, VandenEijndenWeare}, the
bound on the performance is still valid given that the subsolution is the minimum of two
classical sense solutions with a single discontinuous interface. As a consequence, in this case we can just set  $\bar{U}=G$. Its gradient entering in the construction of the asymptotically optimal change of measure takes the form
\[
G_{x}(t,x)=-\frac{2e^{t}(e^{T}-|x|e^{ t})}{(1+q)e^{2T}-q e^{2 t}}\text{sign}(x).
\]
with $\text{sign}(x)=1$ if $x\geq 0$ and $\text{sign}(x)=-1$ if $x< 0$. In the numerical results of Table \ref{Table1} we report estimators of (\ref{Eq:ToBeEstimated}) based on the standard Monte-Carlo estimator (i.e., no change of measure), $\hat{\theta}_{0}(\epsilon)$ and based on the asymptotically optimal change of measure using the control (\ref{Eq:ExampleControl1})-(\ref{Eq:ExampleControl2}), $\hat{\theta}_{1}(\epsilon)$. Here,
\begin{equation}
\hat{\theta}_{1}(\epsilon)\doteq\frac{1}{N}\sum_{j=1}^{N}\left[e^{-\frac{1}{\epsilon}h(\bar{X}^{\epsilon,\gamma,j})}\frac{d\mathbb{P}}{d\bar{\mathbb{P}}}_{j}\right]\label{Def:OptimalEstimator1}
\end{equation}
where
\begin{equation}
\frac{d\mathbb{P}}{d\bar{\mathbb{P}}}_{j}\doteq e^{-\frac{1}{2\epsilon}\int_{0}^{T}\left\Vert u\left(s,\bar{X}^{\epsilon,\gamma,j}_{s},\bar{Y}^{\epsilon,\gamma,j}_{s}\right)\right\Vert^{2}ds
-\frac{1}{\sqrt{\epsilon}}\int_{0}^{T}\left<u\left(s,\bar{X}^{\epsilon,\gamma,j}_{s},\bar{Y}^{\epsilon,\gamma,j}_{s}\right),dZ_{j}(s)\right>}\label{Def:OptimalChangeOfMeasure1}
\end{equation}
and $(Z_{j},\bar{X}^{\epsilon,\gamma,j}_{s},\bar{Y}^{\epsilon,\gamma,j}_{s})$ is an independent sample generated from (\ref{Eq:Main2}) with  control $u$ given by (\ref{Eq:ExampleControl1})-(\ref{Eq:ExampleControl2}).

A few details on the simulation method that was followed are in order here. We concentrate on the  estimation of $\hat{\theta}_{1}(\epsilon)$, since the estimation of $\hat{\theta}_{0}(\epsilon)$ follows exactly the same steps but with control $u=0$. The algorithm first generates realizations of the  Gaussian random field $Q(y,\gamma)$ that has zero mean and covariance structure
$C(y)=e^{-|y|^{2}}$ and of its derivative $\partial_{y}Q(y,\gamma)$, on a fine grid of points. The randomization method, as outlined in \cite{Kramer2005} and \cite{Sabelfeld1991}, is used in order to simulate from $Q(y,\gamma)$ and $\partial_{y}Q(y,\gamma)$. 

Then the control $u$ given by (\ref{Eq:ExampleControl1})-(\ref{Eq:ExampleControl2}) with $\bar{U}_{x}(t,x)=G_{x}(t,x)$ is constructed, which is then inserted in (\ref{Eq:Main2}) with $b=f=-\partial_{y}Q$, $c(x,y,\gamma)=-x$, $g(x,y,\gamma)=0$, $\sigma(x,y,\gamma)=\sqrt{\lambda}$, $\tau_{1}(y,\gamma)=\sqrt{2D}\theta$ and $\tau_{2}(y,\gamma)=\sqrt{2D}\sqrt{1-\theta^{2}}$ in order to produce an independent sample of $(Z_{j},\bar{X}^{\epsilon,\gamma,j}_{s},\bar{Y}^{\epsilon,\gamma,j}_{s})$. We used the standard Euler-Maruyama method  to simulate the sample $(\bar{X}^{\epsilon,\gamma,j}_{s},\bar{Y}^{\epsilon,\gamma,j}_{s})$. The procedure is repeated independently $N$ times. Finally, the estimator $\hat{\theta}_{1}(\epsilon)$ given by (\ref{Def:OptimalEstimator1})-(\ref{Def:OptimalChangeOfMeasure1}) is returned.

The measure of performance is taken to be relative error per sample, $\rho_{0}(\epsilon)$ and $\rho_{1}(\epsilon)$ for $\hat{\theta}_{0}(\epsilon)$ and $\hat{\theta}_{1}(\epsilon)$ respectively, which is defined as follows:
\[
\mbox{relative error per sample} \doteq\sqrt{N}\frac{\mbox{standard deviation of the estimator}}%
{\mbox{expected value of the estimator}}.
\]

The smaller the relative error per sample is, the more efficient the estimator is. However, in practice both the standard deviation and the expected value
of an estimator are typically unknown, which implies that empirical relative error is
often used for measurement. This means that, the expected value of
the estimator will be replaced by the empirical sample mean, and the
standard deviation of the estimator will be replaced by the
empirical sample standard error.

In Table \ref{Table1} we see simulation results and estimates for $\hat{\theta}_{0}(\epsilon)$ and $\hat{\theta}_{1}(\epsilon)$ as well as for their corresponding estimated relative errors per sample $\hat{\rho}_{0}(\epsilon)$ and $\hat{\rho}_{1}(\epsilon)$. Different pairs of $(\epsilon,\delta)$ are used and the parameters used in the simulation are $(T,\lambda,D,\theta)=(1,1,1,1/2)$ and initial point $(x_{0},y_{0})=(0.05,0)$.

\begin{table}[th]
\begin{center}%
\begin{small}
\begin{tabular}
[c]{|c|c|c|c|c|c|c|c|}\hline
No. & $\epsilon$ & $\delta$ & $\epsilon/\delta$ & $\hat{\theta}_{0}(\epsilon)$& $\hat{\theta}_{1}(\epsilon)$ &  $\hat{\rho}_{0}(\epsilon)$ & $\hat{\rho}_{1}(\epsilon)$ \\\hline
$1$ & $0.125$ & $0.04$ & $3.125$ & $3.20e-2$ & $3.22e-2$  &$2.65$ & $3.43$\\\hline
$2$ & $0.0625$ & $0.018$ & $3.472$ & $8.45e-4$ & $8.34e-4$ & $15.23$ & $3.39$ \\\hline
$3$ & $0.05$ & $0.013$ & $3.846$ & $1.27e-4$ & $1.35e-4$ & $17.39$ & $2.51$ \\\hline
$4$ & $0.03125$ & $0.008$ & $3.91$ & $2.96e-7$ & $5.94e-7$ &$53.78$ & $2.32$ \\\hline
$5$ & $0.025$ & $0.005$ & $5.0$ & $1.28e-9$ & $1.53e-8$  &$95.43$ & $2.24$ \\\hline
$6$ & $0.02$ & $0.003$ & $6.67$ & $1.73e-11$ & $2.85e-10$ &$171.23$ & $2.54$ \\\hline
\end{tabular}
\end{small}
\end{center}
\caption{Comparison table:  standard Monte-Carlo versus optimal change of measure. Parameters used are $(T,\lambda,D,\theta)=(1,1,1,1/2)$ and initial point $(x_{0},y_{0})=(0.05,0)$.}%
\label{Table1}%
\end{table}

Table \ref{Table1} shows that the change of measure using the control $u$ from (\ref{Eq:ExampleControl1})-(\ref{Eq:ExampleControl2}) significantly outperforms the estimator with no change of measure, i.e., standard Monte Carlo. In addition, we note that the estimator $\hat{\theta}_{1}(\epsilon)$ seems to be of bounded relative error, which is a stronger notion of efficiency than asymptotic optimality.

Simulations were done using parallel computing in the C programming language (MPI).
We used Mersenne-Twister \cite{MatsumotoNishimura1998} for the random number
generator, with a sample size of $N=6* 10^{6}$. The discretization error is taken to be $\zeta=0.001$. The presence of the fast scales and the fact that our interest is in rare events, which also implies small $\epsilon$, have significant implications on the choice of the discretization step, $\Delta t$.
In particular, as it is derived in a related periodic setting in \cite{DupuisSpiliopoulosWang}, we have the relation
\[
\Delta t=O\left(  \frac{\delta^{2}}{\epsilon}\zeta^{1/p}\right)
\]
where $p$ is the weak order of convergence of the direct discretization scheme being used. In the preceding simulation we used standard Euler-Maruyama scheme, i.e., $p=1$.
As $\epsilon,\delta$ decrease, $\Delta t$ should also decrease in order to maintain the same approximation error, quantified by $\zeta$. In turn, this increases the computational cost significantly. The increasing computational cost highlights the importance of
 fast simulation techniques (such as importance sampling that is studied in this paper) for treating rare event problems in multiscale environments. Namely, Table \ref{Table1} essentially shows that for the same level of accuracy one needs to simulate significantly less number of trajectories when using the optimal change of simulation measure, which in turn could imply significant gain in computational costs.

\section{Conclusions}
In this paper we have considered systems of SDE's with fast and slow component and small noise. We have assumed that the coefficients of the diffusion processes are themselves stationary and ergodic random fields living in some probability space. Our interest is in constructing efficient Monte-Carlo methods for the estimation of rare event probabilities or of expectations of functionals related to rare events that are provably efficient with probability $1$ with respect to the random environment. Using the recently developed related quenched large deviations results of \cite{Spiliopoulos2014}, we construct such schemes and prove their quenched asymptotic optimality. The construction is based on using the gradient of the solution to the corrector from random homogenization theory as well as to subsolutions to the related homogenized HJB equation.

The paper provides a theoretical framework that allows to rigorously assess the performance of importance sampling Monte-Carlo schemes for multiscale diffusions in random environments. Future work on this topic include (a): investigation of the effect of numerical approximations to the gradient of the solution to the macroscopic problem (\ref{Eq:RandomCellProblem}) on the simulation algorithm for the case $b\neq 0$ (recall that by Remark \ref{R:NoUnboundedDriftTerm} when $b=0$, $\chi=0$ and thus the macroscopic problem (\ref{Eq:RandomCellProblem}) is not used), and (b):  explicit construction of subsolutions to the associated homogenized HJB equations. The latter has been partially tackled when one has metastability issues and to be more specific for the problem of escape from an equilibrium, see \cite{DupuisSpiliopoulosZhou2013,DupuisSpiliopoulos2014}.

\section{Acknowledgements}
The author was partially supported by the National Science Foundation (DMS 1312124).

\appendix
\section{Auxiliary technical results}\label{App:AuxiliaryTechnicalResults}

\begin{lemma}\label{L:RestrictionControls}
Assume Conditions \ref{A:Assumption1}-\ref{A:Assumption2}. The infimum of the representation in (\ref{Eq:ToBeBounded}) can be taken over all controls that satisfy Condition
\ref{Eq:RestrictionControls}.
\end{lemma}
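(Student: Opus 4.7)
My strategy is a truncation-and-comparison argument. For any admissible $v\in\mathcal A$, I would define the truncated control $\tilde v(s)=v(s)\mathbf 1_{\{s>\delta^{2} T/\epsilon\}}$; it trivially satisfies (\ref{Eq:RestrictionControls}) because its integral on the initial layer vanishes. My plan is to verify $J(\tilde v)\le J(v)+o(1)$ as $\epsilon\downarrow 0$, where $J$ denotes the expression inside the infimum in (\ref{Eq:ToBeBounded}); passing to infima and then letting the optimality gap $\eta\downarrow 0$ would equate the two infima up to an $o(1)$ error, which is what is needed when (\ref{Eq:ToBeBounded}) is used to establish (\ref{Eq:ToBeBoundedAfterLimit}).

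First I would establish an a priori $L^2$ bound on near-optimal controls. Plugging $v\equiv 0$ into (\ref{Eq:ToBeBounded}) and using that $h$ is bounded, that $\nabla_x\bar U$ is bounded by Condition \ref{Cond:ExtraReg}, and that $\mathbb E^\pi[\|u_\rho\|^2]$ is bounded uniformly in $\rho$ via the $\mathcal H^1$ estimate on $D\tilde\chi_\rho$ stated after (\ref{Eq:RandomCellProblem}) together with Condition \ref{A:Assumption1}, I would obtain $-\epsilon\log Q^{\epsilon,\gamma}\le C_1$ uniformly in $\epsilon$ and in $\nu$-a.e.\ $\gamma$. This forces any $\eta$-nearly optimal $v$ to satisfy $\mathbb E\int_{t_0}^T\|v(s)\|^2\,ds\le C_2$ for a deterministic constant.

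Next I would decompose $J(\tilde v)-J(v)$ into a non-positive term $-\tfrac12\mathbb E\int_0^{\delta^{2}T/\epsilon}\|v\|^2\,ds$ coming from the truncated control cost and a path-difference term involving $2h(\hat X_T^\epsilon)-\int_{t_0}^T\|u_\rho\|^2\,ds$. I would handle the path-difference term in two stages. On the initial layer $[0,\delta^{2}T/\epsilon]$, the time change $\tau=\epsilon s/\delta^{2}$ reveals that drift and diffusion of $d\hat X$ acquire a factor $\delta$, while the extra control-driven displacement $\int_0^{\delta^{2}T/\epsilon}\sigma v_1\,ds$ is $O(\sqrt{\delta^{2}T/\epsilon})$ by Cauchy--Schwarz and the $L^2$ bound above, which vanishes. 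Hence $\hat X^v$ and $\hat X^{\tilde v}$ agree up to $o(1)$ at the exit of the initial layer. On the bulk $[\delta^{2}T/\epsilon,T]$ the two SDEs are driven by the same control, so differences originate only from the values of $(\hat X,\hat Y)$ at the exit of the initial layer, and the fast variable $\hat Y$ relaxes towards its invariant measure $\pi$ on the time scale $\delta^{2}/\epsilon$ under the generator $(\epsilon/\delta^{2})\mathcal L^\gamma$. Invoking the quenched ergodic theorem for controlled diffusions in random environments (Lemma A.6 of \cite{Spiliopoulos2014}) together with the $L^2$ bound, the time-averaged quantities driving $\hat X$ become insensitive in the limit to the initial-layer perturbation, which supplies the desired $o(1)$ estimate.

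The main obstacle I anticipate is the bulk comparison: a naive pathwise Gronwall argument fails because the $b$-drift on $\hat X$ is amplified by the unbounded factor $\epsilon/\delta$. I would instead exploit the centering of $\tilde b$ by the corrector $D\tilde\chi_\rho$ (so that $b$ is effectively replaced by its $\pi$-average) and rely on the quenched ergodic averaging machinery of \cite{Spiliopoulos2014}, rather than on pointwise stability. Once this is in hand, the truncated controls realize the infimum in (\ref{Eq:ToBeBounded}) up to $o(1)$, which is the content of the lemma.
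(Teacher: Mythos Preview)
Your approach is genuinely different from the paper's. The paper does not truncate the control; instead it shows that \emph{every} control $v$ that beats the trivial upper bound obtained from $v=0$ (via Jensen on the representation $Q^{\epsilon}=\mathbb{E}[\exp\{-\tfrac{2}{\epsilon}h(\bar X^{\epsilon}_{T})+\tfrac{1}{\epsilon}\int\|u_{\rho}(\bar X^{\epsilon},\bar Y^{\epsilon})\|^{2}\}]$) automatically satisfies (\ref{Eq:RestrictionControls}). Concretely, it compares $(\hat X^{\epsilon},\hat Y^{\epsilon})$ with the process $(\bar X^{\epsilon},\bar Y^{\epsilon})$ carrying no extra control $v$, uses Lipschitz continuity of $h$ and of $\|u_{\rho}\|^{2}$ to bound $\mathbb{E}\int_{0}^{T}\|v\|^{2}$ by path differences, then performs the time change $s\mapsto \delta^{2}s/\epsilon$ and closes a Gronwall loop between the quantities $\nu^{\epsilon}_{T}=\epsilon^{-1}\mathbb{E}\int_{0}^{\delta^{2}T/\epsilon}\|v\|^{2}$ and $m^{\epsilon}_{T}$ to obtain $|\nu^{\epsilon}_{T}|^{2}\le C(\delta/\epsilon)^{2}$. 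The crucial feature is that the analysis lives entirely on the short window $[0,\delta^{2}T/\epsilon]$, where after rescaling the singular factor $\epsilon/\delta$ becomes $\delta$, so ordinary Lipschitz/Gronwall estimates suffice and no ergodic averaging is invoked at all.

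Your proposal, by contrast, requires a comparison of $\hat X^{v}$ and $\hat X^{\tilde v}$ on the \emph{bulk} $[\delta^{2}T/\epsilon,T]$, and this is where the gap lies. On that interval the two systems share the same control and noise but start from potentially very different $\hat Y$-values (the $Y$-displacement on the initial layer is of order $\delta^{-1}\!\int_{0}^{\delta^{2}T/\epsilon}\|v\|\,ds$, which under only the global bound $\mathbb{E}\int_{0}^{T}\|v\|^{2}\le C_{2}$ is $O(\epsilon^{-1/2})$, not $o(1)$). The $X$-difference then picks up $\frac{\epsilon}{\delta}\int_{t^{*}}^{T}\bigl(b(\hat Y^{v}_{s})-b(\hat Y^{\tilde v}_{s})\bigr)\,ds$. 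You propose to tame this via the corrector and Lemma~A.6 of \cite{Spiliopoulos2014}, but that lemma is a law-of-large-numbers statement along a \emph{single} controlled path; it does not compare two correlated trajectories driven by the same noise from different $Y$-initial data. Replacing $b$ by $\rho\chi_{\rho}-\mathcal L^{\gamma}\chi_{\rho}$ trades the problem for differences of $D\chi_{\rho}(\hat Y^{v})-D\chi_{\rho}(\hat Y^{\tilde v})$, and in the random setting $D\chi_{\rho}$ is merely in $L^{2}(\pi)$ with no Lipschitz control, so this difference is not estimable from the $Y$-discrepancy. The same obstruction hits the $\int_{0}^{T}\|u_{\rho}\|^{2}$ term, which depends on $\hat Y$ through $D\chi_{\rho}$. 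In short, the ``quenched ergodic averaging machinery'' you invoke does not, as stated, deliver a pathwise coupling estimate; the paper's route avoids this obstacle entirely by never leaving the initial layer.
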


\begin{proof}[Proof of Lemma \ref{L:RestrictionControls}]

Without loss of generality, we can consider a function $h(x)$ that is bounded and uniformly Lipschitz
continuous in $\mathbb{R}^{m}$. Namely, there exists a constant $L_{h}$ such that
\[
|h(x)-h(y)|\leq L_{h}\left\Vert x-y\right\Vert
\]
and $\left\Vert h\right\Vert_{\infty}=\sup_{x\in\mathbb{R}^{m}}|h(x)|<\infty$. Generality due to assuming Lipschitz continuity instead of continuity is not lost due to a standard approximation argument, see for example \cite{DupuisEllis}. Let us also  assume for notational convenience $t_{0}=0$.

Recall that $(\bar{X}^{\epsilon},\bar{Y}^{\epsilon})=(\bar{X}^{\epsilon,\gamma},\bar{Y}^{\epsilon,\gamma})$ depend on the random medium $\gamma\in \Gamma$ and that the pair satisfies (\ref{Eq:Main2}). However, for notational convenience, we shall omit this in the notation. We start by observing that the definition of $Q^{\epsilon,\gamma}(0,x_{0},y_{0};u)$ by (\ref{Eq:2ndMoment1}) and Lemma 4.3 of \cite{DupuisSpiliopoulosWang} give
\begin{align*}
Q^{\epsilon}(0,x_{0},y_{0};u)&=\bar{\mathbb{E}}\left[  \exp\left\{
-\frac{2}{\epsilon}h(X^{\epsilon}_{T})\right\}  \left(  \frac{d\mathbb{P}%
}{d\bar{\mathbb{P}}}(X^{\epsilon}, Y^{\epsilon})\right)  ^{2}\right]\nonumber\\
&=
\mathbb{E}\left[  \exp\left\{
-\frac{2}{\epsilon}h(\bar{X}^{\epsilon}_{T})+\frac{1}{\epsilon} \int_{0}^{T}\left\Vert u_{\rho}(s,\bar{X}^{\epsilon}_{s},\bar{Y}^{\epsilon}_{s})\right\Vert^{2} \right\} \right]
\label{Eq:2ndMoment1_a0}%
\end{align*}

Therefore, we get
\begin{equation*}
-\epsilon\log \bar{\mathbb{E}}\left[  \exp\left\{
-\frac{2}{\epsilon}h(X^{\epsilon}_{T})\right\}  \left(  \frac{d\mathbb{P}%
}{d\bar{\mathbb{P}}}(X^{\epsilon}, Y^{\epsilon})\right)  ^{2}\right]\leq
2 \mathbb{E}\left[h(\bar{X}^{\epsilon}_{T})\right]-\mathbb{E} \int_{0}^{T}\left\Vert u_{\rho}(s,\bar{X}^{\epsilon}_{s},\bar{Y}^{\epsilon}_{s})\right\Vert^{2}ds
\label{Eq:2ndMoment1_b}%
\end{equation*}

Using the representation (\ref{Eq:ToBeBounded}), we only need to consider controls $v(s)$ such that
\begin{align*}
\mathbb{E}\left[  \frac{1}{2}\int_{0}^{T}\left\Vert
v(s)\right\Vert ^{2}ds-\int_{0}^{T}\Vert u_{\rho}(s,\hat{X}^{\epsilon}_{s},\hat{Y}^{\epsilon}
_{s})\Vert^{2}ds+2h(\hat{X}^{\epsilon}_{T})\right] \leq 2 \mathbb{E}\left[h(\bar{X}^{\epsilon}_{T})\right]-\mathbb{E} \int_{0}^{T}\left\Vert u_{\rho}(s,\bar{X}^{\epsilon}_{s},\bar{Y}^{\epsilon}_{s})\right\Vert^{2}ds
\end{align*}
where $\left(\hat{X}^{\epsilon}_{s},\hat{Y}^{\epsilon}_{s}\right)$ satisfies (\ref{Eq:Main3}) whereas $\left(\bar{X}^{\epsilon}_{s},\bar{Y}^{\epsilon}_{s}\right)$ satisfies (\ref{Eq:Main2}). After rearranging the latter display and using  the Lipschitz continuity of $h$ and of $\left\Vert u_{\rho}\right\Vert^{2}$ (which follows by Condition \ref{A:Assumption1} and \cite{Rhodes2009a}), we get
\begin{align*}
\mathbb{E}\left[  \frac{1}{2}\int_{0}^{T}\left\Vert
v(s)\right\Vert ^{2}ds\right]&\leq 2\mathbb{E}\left|h(\hat{X}^{\epsilon}_{T})-h(\bar{X}^{\epsilon}_{T})\right| +\mathbb{E}\int_{0}^{T}\left|\left\Vert u_{\rho}(s,\hat{X}^{\epsilon}_{s},\hat{Y}^{\epsilon}_{s})\right\Vert^{2}-\left\Vert u_{\rho}(s,\bar{X}^{\epsilon}_{s},\bar{Y}^{\epsilon}_{s})\right\Vert^{2}\right|ds\nonumber\\
&\leq C_{0}\left\{\mathbb{E}\left\Vert\hat{X}^{\epsilon}_{T}-\bar{X}^{\epsilon}_{T}\right\Vert+\mathbb{E}\int_{0}^{T}\left[\left\Vert\hat{X}^{\epsilon}_{s}-\bar{X}^{\epsilon}_{s}\right\Vert
+\left\Vert\hat{Y}^{\epsilon}_{s}-\bar{Y}^{\epsilon}_{s}\right\Vert\right]ds\right\}
\end{align*}
where $C_{0}<\infty$ is a constant that does not depend on $\epsilon$ or $\delta$. By changing time, we then have
\begin{align}
\frac{1}{\epsilon}\mathbb{E}\left[  \frac{1}{2}\int_{0}^{\frac{\delta^{2}}{\epsilon}T}\left\Vert
v(s)\right\Vert ^{2}ds\right]
&\leq C_{0}\left\{\frac{1}{\epsilon}\mathbb{E}\left\Vert\hat{X}^{\epsilon}_{\frac{\delta^{2}}{\epsilon}T}-\bar{X}^{\epsilon}_{\frac{\delta^{2}}{\epsilon}T}\right\Vert+\frac{1}{\epsilon}\mathbb{E}\int_{0}^{\frac{\delta^{2}}{\epsilon}T}
\left[\left\Vert\hat{X}^{\epsilon}_{s}-\bar{X}^{\epsilon}_{s}\right\Vert+\left\Vert\hat{Y}^{\epsilon}_{s}-\bar{Y}^{\epsilon}_{s}\right\Vert\right]ds\right\}\nonumber\\
&=C_{0}\left\{\frac{\delta}{\epsilon}\mathbb{E}\left\Vert\frac{1}{\delta}\hat{X}^{\epsilon}_{\frac{\delta^{2}}{\epsilon}T}-\frac{1}{\delta}\bar{X}^{\epsilon}_{\frac{\delta^{2}}{\epsilon}T}\right\Vert\right.\nonumber\\
&\left. \qquad+\left(\frac{\delta}{\epsilon}\right)^{2}
\mathbb{E}\int_{0}^{T}\left[\left\Vert\hat{X}^{\epsilon}_{\frac{\delta^{2}}{\epsilon} s}-\bar{X}^{\epsilon}_{\frac{\delta^{2}}{\epsilon} s}\right\Vert+\left\Vert\hat{Y}^{\epsilon}_{\frac{\delta^{2}}{\epsilon} s}-\bar{Y}^{\epsilon}_{\frac{\delta^{2}}{\epsilon} s}\right\Vert\right]ds\right\}
\label{Eq:2ndMoment1_e}%
\end{align}

Let us then define
\[
\nu^{\epsilon}_{T}\doteq \frac{1}{\epsilon}\mathbb{E}\left[  \int_{0}^{\frac{\delta^{2}T}{\epsilon}}\left\Vert v(s)\right\Vert^{2}ds\right]
\]
and
\[
m^{\epsilon}_{s}\doteq \mathbb{E}\left\Vert\frac{1}{\delta}\hat{X}^{\epsilon}_{\frac{\delta^{2}}{\epsilon} s}-\frac{1}{\delta}\bar{X}^{\epsilon}_{\frac{\delta^{2}}{\epsilon} s}\right\Vert+\mathbb{E}\left\Vert\hat{Y}^{\epsilon}_{\frac{\delta^{2}}{\epsilon} s}-\bar{Y}^{\epsilon}_{\frac{\delta^{2}}{\epsilon} s}\right\Vert
\]

After these definitions, (\ref{Eq:2ndMoment1_e}) gives us via Jensen's inequality
\begin{align}
\left|\nu^{\epsilon}_{T}\right|^{2}\leq C_{1}\left\{\left(\frac{\delta}{\epsilon}\right)^{2}m^{\epsilon}_{T}+\left(\frac{\delta}{\epsilon}\right)^{4}\int_{0}^{T}\left|m^{\epsilon}_{s}\right|^{2}ds\right\}\label{Eq:BoundForNu}
\end{align}
for an unimportant constant $C_{1}<\infty$. But then it is easy to see that
\begin{align*}
m^{\epsilon}_{T}&\leq C_{2}\left\{\int_{0}^{T} m^{\epsilon}_{s}ds+
 \left\{\left|\frac{\delta}{\epsilon}\mathbb{E} \left[\int_{0}^{T}\left\Vert v_{1}\left(\frac{\delta^{2}s}{\epsilon}\right)\right\Vert ds\right]\right|^{2}
 +\left|\frac{\delta}{\epsilon}\mathbb{E} \left[\int_{0}^{T}\left\Vert v_{2}\left(\frac{\delta^{2}s}{\epsilon}\right)\right\Vert ds\right]\right|^{2}\right\}\right\}
\end{align*}
where the constant $C_{2}<\infty$ depends only on the Lipschitz constants of $b,c,f,g,\sigma,\tau_{1},\tau_{2}$ and on the sup norm of $\sigma,\tau_{1},\tau_{2}$.  Gronwall lemma, gives us
\begin{align*}
m^{\epsilon}_{T}&\leq C_{2}\left|a^{\epsilon}_{T}\right|^{2}+ C^{2}_{2}\int_{0}^{T}\left|a^{\epsilon}_{s}\right|^{2}e^{C_{2}(T-s)}ds
\end{align*}
where we have defined
\begin{align*}
\left|a^{\epsilon}_{T}\right|^{2}&\doteq\left|\frac{\epsilon}{\delta}\frac{1}{\epsilon}\mathbb{E}\left[  \int_{0}^{\frac{\delta^{2}T}{\epsilon}}\left\Vert v_{1}\left(s\right)\right\Vert ds\right]\right|^{2}+\left|\frac{\epsilon}{\delta}\frac{1}{\epsilon}\mathbb{E}\left[  \int_{0}^{\frac{\delta^{2}T}{\epsilon}}\left\Vert v_{2}\left(s\right)\right\Vert ds\right]\right|^{2}\nonumber\\
\end{align*}
Since, H\"{o}lder inequality followed by Young's inequality give
\begin{align*}
\left|a^{\epsilon}_{T}\right|^{2}
&\leq \frac{1}{\delta^{2}}\frac{\delta^{2}T}{\epsilon}\mathbb{E}\left[  \int_{0}^{\frac{\delta^{2}T}{\epsilon}}\left\Vert v\left(s\right)\right\Vert^{2}ds\right]= T\nu^{\epsilon}_{T}\leq \frac{T^{2}}{2}+\frac{\left|\nu^{\epsilon}_{T}\right|^{2}}{2},
\end{align*}
we subsequently obtain
\begin{align*}
m^{\epsilon}_{T}&\leq C_{2}\left(\frac{T^{2}}{2}+\frac{\left|\nu^{\epsilon}_{T}\right|^{2}}{2}\right)+ C^{2}_{2}\int_{0}^{T}\left(\frac{s^{2}}{2}+\frac{\left|\nu^{\epsilon}_{s}\right|^{2}}{2}\right)e^{C_{2}(T-s)}ds.
\end{align*}

From now on, we omit distinguishing between unimportant constants and $C_{0},C_{1}, C_{2}$ will all be denoted by $C$. However, the constant $C$ may change from line to line, but this is not explicitly mentioned in the notation.
Putting the latter estimate back in (\ref{Eq:BoundForNu}) we get for $\delta/\epsilon$ sufficiently small
\begin{align*}
\left|\nu^{\epsilon}_{T}\right|^{2}& \leq  C \left|\frac{\delta}{\epsilon}\right|^{2} \left[ \left(\frac{T^{2}}{2}+\frac{\left|\nu^{\epsilon}_{T}\right|^{2}}{2}\right)+ C\int_{0}^{T}\left(\frac{s^{2}}{2}+\frac{\left|\nu^{\epsilon}_{s}\right|^{2}}{2}\right)e^{C(T-s)}ds\right]+\nonumber\\
&\quad C\left(\frac{\delta}{\epsilon}\right)^{4}\int_{0}^{T}
\left[\left(\frac{s^{2}}{2}+\frac{\left|\nu^{\epsilon}_{s}\right|^{2}}{2}\right)+ \int_{0}^{s}\left(\frac{\theta^{2}}{2}+\frac{\left|\nu^{\epsilon}_{\theta}\right|^{2}}{2}\right)e^{C(s-\theta)}d\theta\right]ds\nonumber\\
&\leq C\left(\frac{\delta}{\epsilon}\right)^{2}\left[\frac{T^{2}}{2}e^{CT}+C\int_{0}^{T}\frac{|\nu^{\epsilon}_{s}|}{2}e^{C(T-s)}ds\right]
\end{align*}

Thus, we have
\begin{align*}
e^{-CT}\frac{\left|\nu^{\epsilon}_{T}\right|^{2}}{2}
&\leq C \left|\frac{\delta}{\epsilon}\right|^{2} \left[   \frac{T^{2}}{2} +C \int_{0}^{T}e^{-Cs}\frac{\left|\nu^{\epsilon}_{s}\right|^{2}}{2}ds\right]
\end{align*}

Thus, for $\delta,\epsilon$ small enough such that $\delta/\epsilon \ll 1$, Gronwall inequality gives us

\[
\left|\nu^{\epsilon}_{T}\right|^{2}\leq C (\delta/\epsilon)^{2},
\]
where the constant $C$ does not depend on $\epsilon,\delta$ or $\gamma$. This concludes the proof of the lemma.
\end{proof}

\end{document}